\numberwithin{equation}{section}
\author{Edgar Assing \\ assing@math.uni-bonn.de \\  Universit\"at Bonn}
\title{Adelic Vorono\"i summation and subconvexity for $GL(2)$ $L$-functions in the depth aspect}
\theoremstyle{plain}
\newtheorem{lemma}{Lemma}[section]
\newtheorem{prop}{Proposition}[section]
\newtheorem{theorem}{Theorem}[section]
\newtheorem{rem}{Remark}[section]
\DeclareMathOperator{\R}{\mathbb{R}}
\DeclareMathOperator{\C}{\mathbb{C}}
\DeclareMathOperator{\A}{\mathbb{A}}
\DeclareMathOperator{\Q}{\mathbb{Q}}
\DeclareMathOperator{\Z}{\mathbb{Z}}
\DeclareMathOperator{\N}{\mathbb{N}}
\DeclareMathOperator{\p}{\mathfrak{p}}
\DeclareMathOperator{\sgn}{\text{sgn}}
\newcommand{\abs}[1]{\left\vert #1 \right\vert}
\newcommand{\du}[1]{\boldsymbol{#1}}
\newcommand{\nilp}[1]{\left(\begin{matrix} 1&#1 \\ 0&1\end{matrix}\right)}
\newcommand{\cent}[1]{\left(\begin{matrix} #1&0 \\ 0&#1\end{matrix}\right)}
\newcommand{\am}[1]{\left(\begin{matrix} #1&0 \\ 0&1\end{matrix}\right)}
\begin{document}

\maketitle

\begin{abstract}
In this paper we establish a very flexible and explicit Vorono\"i summation formula. This is then used to prove an almost Weyl strength subconvexity result for automorphic $L$-functions of degree two in the depth aspect. That is, looking at twists by characters of prime power conductor. This is the natural $p$-adic analogue to the well studied $t$-aspect.
\end{abstract}

\tableofcontents

\section{Introduction}

This paper adds another result to the vast family of subconvex bounds for $L$-functions. However, we not only generalize a quite recent  subconvexity result for degree two $L$-functions, we also work out a very versatile version of the Vorono\"i summation formula which hopefully has other applications in the future. Before we state our results we will give a brief introduction to the subconvexity problem for automorphic $L$-functions.

Let $L(s)$ be an $L$-function in the sense of \cite{IK04} and let $C(s)$ be its analytic conductor. Then the Pharagm\'en-Lindel\"of principle implies the bound
\begin{equation}
	L(\frac{1}{2}+it) \ll_{\epsilon} C(\frac{1}{2}+it)^{\frac{1}{4}+\epsilon}. \label{eq:conv_bound}
\end{equation}
Due to the nature of the Pharagm\'en-Lindel\"of principle this bound is commonly referred to as convexity bound. The subconvexity problem for $L(s)$, in its most general form, is the problem of improving upon \eqref{eq:conv_bound} in the exponent. The best possible bound one may hope for is
\begin{equation}
	L(\frac{1}{2}+it) \ll_{\epsilon} C(\frac{1}{2}+it)^{\epsilon}. \nonumber
\end{equation}
This is known as the Lindel\"of conjecture and is a corollary of the Riemann Hypothesis for $L(s)$. 

While there are very little results towards the subconvexity problem for general $L$-functions, there is a huge amount of literature dealing with special cases and special families. For example, the subconvexity problem for automorphic $L$-functions of $GL_2$ over number fields has been solved completely, with non-specific exponent, in the ground breaking work \cite{MV10}. On the other hand, it has become a big business to obtain best possible numerical values for the exponent. Establishing strong subconvex bounds in a single aspect of the analytic conductor or for special automorphic $L$-functions has become a benchmark for the tools in use. Examples for such developments are the following. In \cite{Bo17} the bound
\begin{equation}
	\zeta(\frac{1}{2}+it)  \ll_{\epsilon} (1+\abs{t})^{\frac{13}{84}+\epsilon}   \nonumber
\end{equation}
demonstrates the strength of the decoupling method. This might be thought of as the $t$-aspect (or archimedean aspect) of the subconvexity problem for a very special $L$-function. A possible $p$-adic version of this has been considered in \cite{Mi16}. There it has been shown that
\begin{equation}
	L(\frac{1}{2},\chi) \ll_{\epsilon,p} q^{0.1645+\epsilon} \nonumber
\end{equation}
for a Dirichlet character $\chi$ of level $q=p^n$. This has been achieved by introducing an elaborate treatment of $p$-adic exponential pairs. The two bounds discussed so far are numerically very strong but work only for a very limited family of degree one $L$-functions. One out of many results concerning $L$-functions of $GL_2$ is
\begin{equation}
	L(\frac{1}{2}+it,f)  \ll_{f,\epsilon} (1+\abs{t})^{\frac{1}{3}+\epsilon}\nonumber
\end{equation}
for a holomorphic modular form $f$ of full level. This is initially due to Good \cite{Go82}. Another proof was later supplied by Jutila \cite{Ju87}. Recently the family to which this bound applies was enlarged by \cite{BMN17}. Indeed, the authors, relax the assumption on $f$ in the sense that they allow arbitrary level and central character. The $p$-adic analogue of this problem was considered by Blomer and Mili\'cevi\'c in \cite{BM15}. They show that 
\begin{equation}
	L(\frac{1}{2}+it,\chi\otimes f) \ll_{f,\epsilon} (1+\abs{t})^{\frac{5}{2}}p^{\frac{7}{6}}q^{\frac{1}{3}}, \nonumber
\end{equation}
where $f$ is a holomorphic or Maa\ss\  cuspidal newform of full level and $\chi$ is a Dirichlet character modulo $q=p^n$ for $p>2$. Our contribution to the subconvexity problem, similarly to the one in \cite{BMN17}, is to widen the family for which the above estimate holds. We will show the following.

\begin{theorem}\label{th:main_th_1}
Let $f$ be a cuspidal holomorphic or Maa\ss\ 	newform of level $N$ and central character $\omega$. Furthermore, let $\chi$ be a Dirichlet character of conductor $l^n$ for some prime $l$ satisfying $(l,2N)=1$ and $n\geq 5$. Then
\begin{equation}
	L(\frac{1}{2}+it,\chi\otimes f) \ll_{f,\epsilon} (1+\abs{t})^{\frac{5}{2}}l^{\frac{7}{6}+(\frac{1}{3}+\epsilon)n}. \nonumber
\end{equation}
\end{theorem}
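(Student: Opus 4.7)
The analytic conductor of $\chi\otimes f$ satisfies $C \asymp N l^{2n}(1+\abs{t})^2$, so \eqref{eq:conv_bound} yields the convexity bound $l^{n/2+\epsilon}(1+\abs{t})^{1/2+\epsilon}$; the target $l^{7/6+(1/3+\epsilon)n}(1+\abs{t})^{5/2}$ therefore saves a full cube root in the depth aspect, i.e. essentially Weyl strength. The plan follows the familiar template of approximate functional equation, Voronoi summation, then Cauchy--Schwarz plus Poisson, with the flexible Voronoi formula developed in the previous section carrying the main analytic weight.

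First I would use the approximate functional equation, together with Mellin inversion and a smooth dyadic decomposition, to reduce the problem to bounding sums of the form
\[ S(M) = \sum_{m} \lambda_f(m)\,\chi(m)\, V(m/M), \]
where $V$ is a smooth compactly supported weight absorbing the archimedean oscillation in $t$, and $M$ ranges over dyadic scales with the critical range $M \asymp l^n$.

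The crucial step is then to apply the flexible Voronoi summation formula to $S(M)$ with respect to the modulus $l^n$. Since $(l,2N)=1$ the hypotheses of the formula are satisfied; expanding $\chi$ as a Gauss sum first rewrites the twist in additive form, and Voronoi then transforms the inner sum into a dual sum of length $\asymp l^n$ twisted by Kloosterman-type sums modulo $l^n$. The formula supplies the explicit archimedean Bessel transform of $V$ (which will account for the final $(1+\abs{t})^{5/2}$) together with explicit local factors at the primes dividing $N$ and $\omega$, uniformly bounded in $l$ and $\chi$. The flexibility of the formula is essential here: the standard full-level Voronoi does not cover arbitrary level and central character.

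To extract the cube-root saving I would apply Cauchy--Schwarz in the Gauss-sum variable, open the resulting square, and apply Poisson summation on the dual $m$-variable with modulus $l^{\lceil n/2\rceil}$. The hypothesis $n\geq 5$ enters precisely here, providing enough $p$-adic room to carry out the attendant conductor-lowering splitting (equivalently, a $p$-adic stationary phase evaluation of the complete exponential sum modulo $l^n$) with square-root cancellation off a thin degenerate locus. Balancing the diagonal against the generic off-diagonal then yields the Weyl-strength factor $l^{(1/3+\epsilon)n}$, while the prefactor $l^{7/6}$ collects the ramified local Voronoi weights at $l$. The main obstacle, and the source of the $l^{7/6}$ overhead, is the combinatorial bookkeeping of these $p$-adic weights as the Gauss-sum argument and the Poisson-dual parameter simultaneously vary; securing uniform square-root cancellation across every degenerate stratum is where the flexibility of the new Voronoi formula pays its dividend.
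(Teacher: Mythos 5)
Your opening and closing steps match the paper's framework: Theorem~\ref{th:main_th_1} is indeed deduced from a smooth dyadic sum estimate (Theorem~\ref{th:main_th_2}) via the approximate functional equation, and the final saving does come from a Cauchy--Schwarz followed by a $p$-adic stationary phase analysis with a diagonal/off-diagonal balance. But the core of your argument has a structural gap that would make it collapse. Expanding $\chi$ into Gauss sums and then applying Voronoi with modulus $l^n$ is circular: after Voronoi the $a$-sum $\sum_{a}\overline{\chi}(a)e(-\overline{a}m/l^n)$ reassembles into $\overline{\chi}(-m)\tau(\chi)$, so you have merely reproduced the functional equation of $L(s,\chi\otimes f)$ and transformed $S(M)$ into a dual sum of the same length twisted by $\overline{\chi}$ --- no saving whatsoever. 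Moreover, ``Cauchy--Schwarz in the Gauss-sum variable'' applied to a sum over $a\bmod l^n$ loses a factor $l^{n/2}$ outright, which already exceeds the convexity bound. The missing idea is the $p$-adic Farey dissection (the paper's Section~3.1, imported from Blomer--Mili\'cevi\'c): one uses the $p$-adic logarithm \eqref{eq:char_log} to write $\chi_l(m)$ as $\psi_l(\alpha\log_l(m)/l^{n_l})$, partitions $\Z_l^{\times}$ into cells $\Z_l^{\times}[a,b,k]$ on which $\alpha/m$ is well approximated by a Farey fraction $a/b$ with $a,b$ of size roughly $l^{n/2}$ or smaller, and thereby replaces the multiplicative twist by additive characters $e(a\overline{b}m/l^{n_l/2})$ of \emph{lowered} conductor. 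Reciprocity then moves the modulus from $l^{n_l/2}$ to the small denominator $b$, and it is Voronoi with modulus $b$ (not $l^n$) --- this is exactly where the flexibility of Theorem~\ref{th:vor_with_Cs} with $(b,N)\neq 1$ is needed --- followed by Cauchy--Schwarz over the Farey parameters $s=(a,b,k)$ and the stationary phase evaluation of the correlation sums $\Xi_{s_1,s_2}$, that produces the exponent $\tfrac13$. Your observation that $n\geq 5$ provides ``$p$-adic room'' is correct in spirit, but the room is consumed by the Farey parameters $q\leq n_l/8$ and the truncation of $\log_l$, not by a splitting inside a Poisson step.

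Separately, note that the paper's proof of Theorem~\ref{th:main_th_1} itself is only the reduction to Theorem~\ref{th:main_th_2}: one adelizes $f$ and $\chi$ to get $\pi=\chi\otimes\pi_0$ with $n_l=2n$, absorbs $m^{-it}$ into the weight (whence $Z\asymp 1+\abs{t}$), and sums the dyadic bounds $Z^{5/2}l^{7/6}M^{1/2}l^{(1/6+\epsilon)n_l}$ against $M^{-1/2}$ over $M\ll l^{n}(1+\abs{t})^{1+\epsilon}$. If you take that reduction as given, everything after your first paragraph belongs to the proof of Theorem~\ref{th:main_th_2}, and it is there that the argument as you have written it does not go through.
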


As in \cite{BM15} this result will follow from a more general estimate for smooth sums of Hecke eigenvalues of automorphic forms. We will now state this result and refer to Subsection~\ref{sec:not} below for notation that was not yet introduced.

\begin{theorem}\label{th:main_th_2}
Let $l$ be an odd prime, $n_l\geq 10$ even, and $\pi$ be a cuspidal automorphic representation of conductor $Nl^{n_l}$ such that the $l$th component $\pi_l$ of $\pi$ is isomorphic to $\chi_l\abs{\cdot}_l^{\kappa_1}\boxplus \chi_l\abs{\cdot}_l^{\kappa_2}$ for some character $\chi_l\colon \Q_l^{\times}\to \C^{\times}$ of conductor $\frac{n_l}{2}$. Further, let $F$ be a smooth function with support in $[1,2]$ that satisfies $F^{(j)}\ll Z^j$ for some $Z\geq 1$. Then
\begin{equation}
	 L := \sum_{n\in \Z} \lambda_{\pi}(m)F\left( \frac{n}{M}\right) \ll_{\pi,\epsilon} Z^{\frac{5}{2}}l^{\frac{7}{6}} M^{\frac{1}{2}}l^{(\frac{1}{6}+\epsilon)n_l}, \nonumber
\end{equation}
for all $M\geq 1$ and all $\epsilon>0$.
\end{theorem}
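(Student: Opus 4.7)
The plan is to attack $L$ through the explicit Voronoi summation formula developed earlier in the paper. The key observation is that $\pi_l \cong \chi_l \abs{\cdot}_l^{\kappa_1} \boxplus \chi_l \abs{\cdot}_l^{\kappa_2}$ has both inducing characters carrying the same highly ramified twist $\chi_l$ of conductor $n_l/2$, precisely the regime in which the local Voronoi transform at $l$ admits an explicit stationary phase analysis. Note in passing that $L_l(s,\pi)=1$ so the Hecke eigenvalues $\lambda_\pi(n)$ vanish on multiples of $l$; thus only coprime $n$ contribute, and Voronoi is indeed available.

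First I would apply Voronoi to rewrite
\[
L = \frac{c_\pi}{\sqrt{Nl^{n_l}}}\sum_{m\in\Z} \lambda_{\tilde\pi}(m)\,\mathcal H_\infty(m)\,\mathcal H_l(m) \prod_{p\mid N}\mathcal H_p(m),
\]
where each $\mathcal H_v$ is the local integral transform of $W$ dictated by $\pi_v$. The factors at $p\mid N$ are $O_\pi(1)$. The archimedean factor $\mathcal H_\infty$ is a Bessel/Hankel-type transform whose support and size are controlled by repeated integration by parts against $W$; the hypothesis $W^{(j)}\ll Z^j$ localizes $m$ to a definite archimedean scale and produces a pointwise bound carrying the factor $Z^{5/2}$, which is where the archimedean exponent of the theorem enters.

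The decisive step is the evaluation of the $l$-adic transform $\mathcal H_l(m)$. Since both inducing characters of $\pi_l$ carry the same ramified twist $\chi_l$ of conductor $n_l/2$, the local Whittaker function is essentially a Gauss sum attached to $\chi_l$, and $\mathcal H_l(m)$ becomes a $p$-adic exponential integral whose phase is built from $\chi_l$ together with the standard additive character. That phase has a unique non-degenerate critical point modulo $l^{n_l/2}$, and non-archimedean stationary phase (completing the square in $\Q_l$) shows that $\mathcal H_l(m)$ is supported on $m$ lying in a single residue class modulo an explicit divisor of $l^{n_l/2}$, on which it is of controlled size. This thins the dual sum by a factor of roughly $l^{n_l/2}$ compared with the trivial range and is the origin of the depth-aspect saving.

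Combining the two localizations shrinks the effective dual sum to a short range, after which Cauchy-Schwarz together with the Rankin-Selberg bound $\sum_{m\leq X}\abs{\lambda_{\tilde\pi}(m)}^2 \ll_\pi X^{1+\epsilon}$ reduces the proof to tracking the normalizations coming from Voronoi (the factor $1/\sqrt{Nl^{n_l}}$ and the associated Gauss sum factors). The principal obstacle, and the reason the paper develops such a flexible Voronoi formula in the first place, is to identify the correct oscillatory integral representation of $\mathcal H_l$ so that the stationary phase can be performed cleanly, yielding the precise exponent $\tfrac{1}{6}n_l$ together with the sharp prefactor $l^{7/6}$.
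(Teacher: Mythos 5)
There is a genuine gap, and it is the central one: a single application of Voronoi to $L$ (or to $L$ written as $\sum_m \chi_l^{-1}(m)\lambda_{\pi_0}(m)F(m/M)$ with the Gauss sum opened into additive characters of modulus $l^{n_l/2}$) cannot produce the exponent $\frac{n_l}{6}$. In the relevant range $M\asymp l^{n_l/2}$ the modulus equals $\sqrt{M}\cdot l^{n_l/4}\dots$, i.e.\ one sits at the self-dual point: the dual sum has essentially the same length as the original one, and the localization of $m$ to a residue class modulo $l^{n_l/2}$ that you describe is exactly compensated by the size $\approx l^{n_l/4}$ of the $l$-adic Hankel transform on its support. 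Estimating the dual sum by Cauchy--Schwarz and Rankin--Selberg at that point returns at best the convexity-type bound $M^{1/2}l^{n_l/4+\epsilon}$, not $M^{1/2}l^{n_l/6+\epsilon}$. No amount of care with normalizations closes this gap; an additional structural input is required.

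The missing input is the conductor-lowering mechanism of Blomer--Mili\'cevi\'c, which the paper follows. Concretely: (i) one writes $\chi_l^{-1}(m)=\psi_l(\alpha\overline{m}/l^{n_l/2})$ via the $l$-adic logarithm and performs a $p$-adic Farey dissection of $\Z_l^{\times}$, approximating $\alpha/m$ by fractions $a/b$ with \emph{small} denominator $b$; (ii) reciprocity converts the additive twist of modulus $l^{n_l/2}$ into one of modulus $b$ plus a slowly varying archimedean phase absorbed into $W_{\infty}$; (iii) Voronoi (Theorem~\ref{th:vor_with_Cs}) is then applied with modulus $b\ll l^{n_l/4}$ rather than $l^{n_l/2}$ --- this is precisely why the flexible formula with no coprimality between $b$ and the level is needed --- producing a genuinely shorter dual sum; (iv) the saving is then extracted \emph{on average over the Farey fractions}: one applies Cauchy--Schwarz in the dual variable $m$, interchanges summation, and bounds the correlation sums $\Xi_{s_1,s_2}$ of the oscillatory factors $\kappa_{s_1}\overline{\kappa_{s_2}}$ by $p$-adic stationary phase. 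Steps (i)--(iv) are entirely absent from your proposal, and your final Cauchy--Schwarz is applied to the wrong family (the dual $m$-sum for a single modulus rather than the pairs of Farey fractions). Your description of the local transforms at $\infty$ and at $l$ is broadly consistent with the paper's Lemmas on $\mathcal{I}_{s,c}^{\pm}$ and $\mathcal{L}_{s,c}$, but those evaluations are inputs to the bilinear estimate, not substitutes for it.
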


We will prove this in Section~\ref{sec:proof_of_main_th} below, following exactly the same strategy as in \cite{BM15}. The novelty, which makes our generalization work, is a new version of the Vorono\"i summation formula. Such formulae play an important role in modern number theory, see \cite{MS04} for a very nice introduction. Our approach to Vorono\"i summation is based on ideas outlined in \cite{Te_vor}. The result is a very technical formula stated in Theorem~\ref{th:vor_with_Cs} below. The upshot is that we do not need any coprimality conditions between the denominator of the additive twist and the level of the automorphic form. A similar summation formula, with a different proof, has been used in \cite{BMN17}.

There are several natural generalizations of Theorem~\ref{th:main_th_2} that come to mind. Indeed, with a bit more work one should be able to relax the prescribed shape at the place $l$. Indeed it seems possible to deal with $\pi_l = \chi \pi_0$ for some fixed twist-minimal representation $\pi_0$ of $GL_2(\Q_l)$ and some non-trivial character $\chi$. 

Another interesting aspect would be to optimize the $N$ dependence in Theorem~\ref{th:main_th_2}. In our estimates we have been very wasteful in that aspect and we have thus included the $N$ dependence in the implied constant. However, one might be able to use explicit evaluations of ramified Whittaker new vectors in order to get the $N$-dependence into a reasonable range.

Finally, it is clear how to adapt our approach to the Vorono\"i summation formula to the number field setting. It would certainly be interesting to see if it is possible to work out a version of Theorem~\ref{th:main_th_2} over number fields.

\textbf{Acknowledgements.} I would like to thank A. Booker for suggesting this problem. I also want to thank A. Corbett for many valuable discussions on this and related topics. Finally I thank the referee for many comments which helped to improve this paper significantly.

\subsection{Notation and prerequisites} \label{sec:not}

Throughout this paper we will only consider the base field $\Q$. Its places, including the archimedean place $\infty$, are usually denoted by $p$. Each place $p$ comes with the local field $\Q_p$. For $p<\infty$ these fields are non-archimedean and we denote their ring of integers by $\Z_p$, and the unique maximal ideal by $\p$. We choose uniformizers $\varpi_p$ and normalize the absolute value by $\abs{\varpi_{p}}_p=p^{-1}$. The corresponding valuation on $\Q_p$ is determined by $v_p(\varpi)=1$. Further, we equip the local fields $\Q_p$ with two measures. First of all, we consider the Haar measure $\mu_p$ on $(\Q_p,+)$. If $p<\infty$,  these measures will be normalized such that $\mu_p(\Z_p)=1$. On $\Q_{\infty}=\R$ we take $\mu$ to be the standard Lebesgue measure. The second measure is the Haar measure $\mu_p^{\times}$ on $(\Q_p^{\times},\times)$. If $p<\infty$, it is explicitly given by $\mu_p^{\times}=\frac{\zeta_p(1)}{\abs{\cdot}_p}\mu_p$, where $\zeta_p(s)= (1-p^{-s})^{-1}$ denotes the local Euler factor of the Riemann zeta function. In particular, one has $\mu_p^{\times}(\Z_p^{\times}) = 1$. At the archimedean place $\infty$ we simply choose $\mu_{\infty}^{\times} = \frac{\mu_{\infty}}{\abs{\cdot}_{\infty}}$. The adele ring (resp. idele ring) over $\Q$ will be denoted by $\A$ (resp. $\A^{\times}$) and is equipped with the product measure $\mu$ (resp. $\mu^{\times}$).

We fix additive characters $\psi_p$ on $\Q_p$ such that the global additive character $\psi = \otimes_p \psi_p$ is $\Q$-invariant. Furthermore, at $p=\infty$ we take $\psi_{\infty}(x) = e(x) = e^{2\pi i x}$ and we assume that $\psi_p$ is trivial on $\Z_p$ but non-trivial on $\p^{-1}$ for every $p<\infty$. For a Schwartz-Bruhat function $f\in \mathcal{S}(\Q_p)$ we define the $p$-adic Fourier transform by
\begin{equation}
	\widehat{f}(y) = c_p\int_{\Q_p}f(x)\psi_p(xy)d\mu(x), \quad c_p = \begin{cases} 1 &\text{ if } p<\infty,\\ \frac{1}{\sqrt{2\pi}} &\text{ if } p = \infty. \end{cases} \nonumber
\end{equation} 
Note that our measures are normalized to be self-dual with respect to $\psi_p$.

For $p<\infty$ let ${}_p\mathfrak{X}$ denote the set of all multiplicative characters $\mu\colon \Q_p^{\times}\to S^1$ such that $\mu(\varpi_p)=1$. We also write ${}_p\mathfrak{X}_n$ (resp. ${}_p\mathfrak{X}_n'$) for the set of characters $\mu\in {}_p\mathfrak{X}$ with exponent-conductor $a(\mu)\leq n$ (resp $a(\mu)=n$). At the archimedean place we define  ${}_{\infty}\mathfrak{X}=\{1, \sgn\}$. These are exactly those characters $\mu\colon \R^{\times}\to S^1$ which are trivial on $\R_+$. Every quasi-character $\mu \colon \Q_p^{\times}\to \C^{\times}$ can be decomposed as $\mu=\abs{\cdot}_p^{t}\mu_0$ for some $t\in \C$ and some $\mu_0\in {}_p\mathfrak{X}$.  A global homomorphism $\chi \colon \Q^{\times}\setminus \A^{\times}\to \C^{\times}$ will be called a Hecke character. Note that each $\mu\in {}_l\mathfrak{X}$ induces a Hecke character $\chi_{\mu}$ defined by $\chi_{\mu} = \prod_p \chi_{\mu,p}$ with $\chi_{\mu,\infty}=\sgn^{\frac{1-\mu(-1)}{2}}$ and 
\begin{equation}
	\chi_{\mu,p}(ap^k) = \begin{cases}
		\mu(a) &\text{ if } p=l,\\
		\mu(p)^{-k} &\text{ if } p\neq l.
	\end{cases} \nonumber
\end{equation}
for $p<\infty$, $a\in \Z_p^{\times}$.  

A very useful tool is the $p$-adic logarithm $\log_p$, which can be defined on the set $1+\p\subset \Z_p$ via the well known Taylor series of the logarithm. As in the archimedean setting the $p$-adic logarithm is useful in order to translate between multiplicative and additive oscillations. Indeed, for $\mu_p\in{}_p\mathfrak{X}_n'$, $\kappa>0$ and $x\in \Z_p$ we have
\begin{equation}
	\mu_p(1+\varpi_p^{\kappa} x) = \psi_p\left(\frac{\alpha_{\mu_p}}{\varpi_p^n}\log_p(1+\varpi_p^{\kappa}x)\right) \label{eq:char_log}
\end{equation}
for some $\alpha_{\mu_p}\in \Z_p^{\times}$. In particular, if $\kappa\geq \frac{n}{2}$, one can safely truncate the logarithm after the first term and obtain
\begin{equation}
	\mu_p(1+\varpi_p^{\kappa} x) = \psi_p\left(\frac{\alpha_{\mu_p}x}{\varpi_p^n}\right). \nonumber
\end{equation} 

Finally, it will be useful to have a shorthand notation to deal with several places at once. For every $M\in \N$ we define
\begin{equation}
	\zeta_{M}(s) = \prod_{p\mid M}\zeta_p(s),\   \abs{\cdot}_M = \prod_{p\mid M}\abs{\cdot}_p \text{ and } (m,M^{\infty})= \prod_{p\mid M} p^{v_p(m)}. \nonumber
\end{equation}
We also write $\du{\mu}$ for a $M$-tuple of characters $\mu_p\in {}_p\mathfrak{X}$. Since we can always complete the tuple to all $p$ by inserting the trivial character at the remaining places, we dropped $M$ from the notation. One evaluates these tuples as as follows:
\begin{equation}
	\du{\mu}(x) = \prod_{p\leq \infty} \mu_p(x_p) = \prod_{p\mid M} \mu_p(x_p). \nonumber
\end{equation}
It is important not to confuse these tuples with Hecke characters. However, we can define the associated Hecke character
\begin{equation}
	\chi_{\du{\mu}} = \prod_{p\leq \infty} \chi_{\mu_p}. \nonumber
\end{equation}

Let $R$ be a commutative ring with 1. In our case $R$ will be either $\Q$, $\Q_p$, or $\A$. We set $G(R) = GL_2(R)$ and define the subgroups
\begin{eqnarray}
	Z(R) = \left\{  z(r)=\cent{r} \colon r\in R^{\times} \right\},& \quad A(R) = \left\{  a(r)=\am{r} \colon r\in R^{\times} \right\},  \nonumber\\
	N(R) = \left\{  n(x)=\nilp{x} \colon x\in R \right\}& \text{ and } \quad B(R) = Z(R)A(R)N(R). \nonumber
\end{eqnarray}
We use the following compact subgroups of $G(R)$ which depend on the underlying ring $R$. Define
\begin{eqnarray}
	K_{p} &=& GL_2(\Z_{p}) \text{ for }p<\infty, \nonumber \\
	K_{\infty} &=&  O_2(\R) ,  \nonumber \\
	K&=& \prod_{p\leq \infty}K_p \subset G(\A). \nonumber 
\end{eqnarray}
At the non-archimedean places, $p<\infty$, we also need the congruence subgroups
\begin{eqnarray}
	K_{1,p}(n) = K_{p} \cap \left[ \begin{matrix} 1+\varpi_{p}^n \Z_{p}&\Z_{p} \\ \varpi_{p}^n\Z_{p} &\Z_{p}\end{matrix} \right]. \nonumber
\end{eqnarray}
Finally we denote the long Weyl element by
\begin{equation}
	w=\left(\begin{matrix} 0&1 \\ -1&0\end{matrix}\right). \nonumber
\end{equation}

Let us briefly describe the measures on the groups in use. Locally, we will stick to the measure convention from \cite{Sa15_2}. This means, we use the identifications $N(R) = (R,+)$, $A(R) = R^{\times}$, and $Z(R) = R^{\times}$ to transport the measures defined on the local fields to the corresponding groups. Further, we take $\mu_{K_{p}}$ to be the probability Haar measure on $K_{p}$.
Globally, we choose the product measure on $K$, $N(\A)$ and $A(\A)$ coming from the previously defined local measures. The measure on $G(\A)$, in Iwasawa coordinates, is given by
\begin{equation}
	\int_{Z(\A)\setminus G(\A)} f(g) d\mu(g) = \int_K \int_{\A^{\times}} \int_{N(\A)} f(na(y) k) d\mu_{N(\A)}(n) \frac{d\mu^{\times}_{\A^{\times}}(y)}{\abs{y}_{\A}} d\mu_K(k). \nonumber
\end{equation}

In this work $\pi$ will usually denote a cuspidal automorphic representation of $G(\A)$ with central (Hecke) character $\omega_{\pi}$. That is an irreducible constitute of the right regular representation on $L_{\text{cusp}}^2(G(\Q)\backslash G(\A), \omega_{\pi})$.  It is well known that we can factor
\begin{equation}
	\pi = \bigotimes_{p\leq \infty} \pi_p, \nonumber
\end{equation}
where $\pi_p$ are irreducible, admissible, unitary representations of $G(\Q_p)$. These local representations come with several invariants. For example, the log-conductor $n_p=a(\pi_p)$ and the local central character $\omega_{\pi,p}$. The  contragredient representation will be denoted by $\tilde{\pi}_p$. Note that $\tilde{\pi}_p = \omega_{\pi,p}^{-1}\pi_p$. Attached to $\pi_p$ there are the usual suspects $\epsilon(\frac{1}{2},\pi_p)$ and $L(s,\pi_p)$. The representations of $G$ over local fields are completely classified. More precisely, we know that each smooth unitary irreducible infinite dimensional representation $\pi$ of $G(\Q_p)$ belongs to one of the following families.

\begin{enumerate}
\item \textbf{Twists of Steinberg:} $\pi_p = \chi St$, for some unitary character $\chi$. In this case we have $\omega_{\pi,p} = \chi^2$ and $a(\pi_p) = \max(1,2a(\chi))$. Furthermore, the $L$-factor as well as the $\epsilon$-factor are given by
\begin{equation}
	L(s,\pi_p) = \begin{cases} L(s,\abs{\cdot}_p^{\frac{1}{2}}) &\text{ if }\chi = 1, \\ 1 &\text{ if } \chi \neq 1,   \end{cases} \text{ and } \epsilon(\frac{1}{2}, \pi_p) = \begin{cases} -1 &\text{ if } \chi = 1, \\  \epsilon(\frac{1}{2},\chi)^2 &\text{ if } \chi \neq 1. \end{cases} \nonumber
\end{equation}

\item \textbf{Principal series:} $\pi_p = \chi_1\boxplus\chi_2$, for characters $\chi_1$ and $\chi_2$. In particular, $a(\pi) = a(\chi_1)+a(\chi_2)$ and $\omega_{\pi,p} = \chi_1\chi_1$. Concerning the $L$-factor we know
\begin{equation}
	L(s,\pi_p) = L(s,\chi_1)L(s,\chi_2) \text{ and } \epsilon(\frac{1}{2},\pi_p) = \epsilon(\frac{1}{2},\chi_1)\epsilon(\frac{1}{2}, \chi_2). \nonumber
\end{equation}

\item \textbf{Supercuspidal representations:} If $\pi_p$ is supercuspidal then $L(s,\pi_p)=1$. The other invariants are slightly more difficult to describe. Since it is not necessary for this work we will not go into further detail. 
\end{enumerate}

This list can be extracted from \cite{JL70} and \cite{Sc02}. Note that the characters $\chi_1$, $\chi_2$ appearing in unitary principal series representations are usually unitary themselves. However, if $\chi_1\vert_{\Z_p^{\times}} = \chi_2\vert_{\Z_p^{\times}}$ one might encounter situations where $\abs{\chi_i(\varpi_p)}\neq 1$. In this case one is dealing with $p$-adic complementary series. Unfortunately we can not exclude these representations from our discussion as the Ramanujan conjecture for $G(\A)$ is not yet known in full generality.

To any automorphic representation $\pi$ we attach its (incomplete)-L-function
\begin{equation}
	L(s,\pi) = \prod_{p< \infty} L(s,\pi_p) = \sum_{n\in \N} \lambda_{\pi}(n) n^{-s} \text{ for }\Re(s)\gg 1. \nonumber
\end{equation}
This function has a meromorphic continuation and satisfies the functional equation
\begin{equation}
	L(s,\pi_{\infty})L(s,\pi) = \left(\prod_{p\leq \infty} \epsilon(s,\pi_p)\right) L(1-s,\tilde{\pi}_{\infty})L(1-s,\tilde{\pi}). \nonumber
\end{equation}
The conductor of $\pi$ is given by $\prod_{p<\infty}p^{a(\pi_p)}$. This is not to be confused with the analytic conductor of $\pi$ mentioned in the introduction.

It is well known that in our case each $\pi$ is generic. Thus, there exists a (unique) $\psi$-Whittaker model $\mathcal{W}(\pi)$. This allows us, after fixing a suitable normalization, to associate to each $\phi$ in the representation space of $\pi$ a Whittaker function $W_{\phi}\in\mathcal{W}(\pi,\psi)$. If $\phi \in L_{\text{cusp}}^2(G(\Q)\backslash G(\A), \omega_{\pi})$ is a cuspidal function transforming according to $\pi$ the associated Whittaker function is given by the global Fourier coefficient
\begin{equation}
	W_{\phi}(g) = \int_{N(\Q)\setminus N(\A)}\phi(ng)\psi^{-1}(n)dn. \nonumber
\end{equation}

The twist $\chi \pi$ of an automorphic representation $\pi$ by a Hecke character $\chi$ is also an automorphic representation. It has central character $\chi^2\omega_{\pi}$ and its local constitutes are given by $\chi_p \pi_p$. 

At last, we introduce two more notions. By $\pi^b$ we denote the automorphic representation obtained from $\pi$ by passing (essentially) to the contragredient at the places $p\mid b$. More precisely, 
\begin{equation}
	\pi^b = \chi_{\omega_{\pi,b}^{-1}}\pi, \quad \text{for } \omega_{\pi,b} =\prod_{p\mid b} \omega_{\pi,p}. \nonumber
\end{equation}
More generally, we define
\begin{equation}
	(\pi)_{\du{\mu}} = \chi_{\du{\mu}}\pi. \nonumber
\end{equation}
These constructions may seem quite artificial. However, they will prove useful later on. Even more, the first construction is closely related to the theory of Atkin-Lehner involutions for classical newforms.

\section{A Vorono\"i summation formula}

In this section we use the machinery of automorphic representations to produce a very flexible Vorono\"i-type formula. In particular we want to produce a summation formula which relates a smoothed sum of Hecke eigenvalues to a dual sum which involves Hecke eigenvalues of twisted automorphic forms. To this end let $\pi$ be a cuspidal automorphic representation with conductor $Nl^{n_l}$ and central character $\omega_{\pi}$. The $L$-function of the associated contragredient representation is given by
\begin{equation}
	L(s,\tilde{\pi}) =  \sum_{n\in \N} \overline{\lambda_{\pi}(n)}n^{-s}. \nonumber
\end{equation} 
Our summation formula will feature the following ingredients. The main objects of interest are the Hecke eigenvalues $\lambda_{\pi}(n)$. Furthermore, we will allow additive twists $\psi_{\infty}(\zeta_0 m)$ for $\zeta_0 \in \Q$ satisfying $v_l(\zeta_0)>0$. Finally, we fix smooth, compactly supported test functions $W_{\infty} \colon \R\to \C$ and $W_{l}\colon \Q_{l} \to \C$.

In the following we will build on the ideas described in \cite{Te_vor} to derive an explicit Vorono\"i summation formula which is well suited for our application to the subconvexity problem. On the way we will use results from \cite{As17_1} to treat the places dividing $N$. Our method to implement the $l$-adic test function $W_l$ owes a great deal to the work \cite{Co17}. 

The main theorems of this section are stated at its very end. The reason for this is, that one should view this chapter as a recipe for generating explicit Vorono\"i formulae. We start of with the following fundamental identity.

\begin{lemma} \label{lm:basic_whitt_id}
Let $\zeta\in \A$ and let $\phi$ be a cuspidal function transforming according to $\pi$. Then we have
\begin{equation}
	\sum_{\gamma\in\Q^{\times}} \psi(\gamma\zeta)W_{\phi}\left(\left(\begin{matrix} \gamma&0 \\ 0&1 \end{matrix}\right)\right) = \sum_{\gamma\in \Q^{\times}}\tilde{W}_{\phi}\left(\left(\begin{matrix} \gamma&0 \\ 0&1 \end{matrix}\right)\left(\begin{matrix} 1&0 \\-\zeta&1 \end{matrix}\right)\right). \label{eq:basic_voronoi}
\end{equation}
for $\tilde{W}_{\phi}(g) = W_{\phi}(w{}^tg^{-1})$.
\end{lemma}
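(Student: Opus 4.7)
The plan is to derive \eqref{eq:basic_voronoi} from the Whittaker/Fourier expansion
\begin{equation}
\phi(g) = \sum_{\gamma \in \Q^{\times}} W_{\phi}\left(\am{\gamma} g\right), \nonumber
\end{equation}
valid for any cusp form of $\tilde{\pi}$, combined with the left $G(\Q)$-invariance of $\phi$. Since $w \in G(\Q)$ we have $\phi(\nilp{\zeta}) = \phi(w\nilp{\zeta})$; evaluating the expansion at both points and matching the two resulting sums will produce the identity.

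First, applied to $g = \nilp{\zeta}$, the commutation $\am{\gamma}\nilp{\zeta} = \nilp{\gamma \zeta} \am{\gamma}$ together with the left $N$-equivariance $W_{\phi}(\nilp{x} h) = \psi(x) W_{\phi}(h)$ rewrites the right-hand side as $\sum_{\gamma \in \Q^\times} \psi(\gamma\zeta) W_{\phi}(\am{\gamma})$, i.e.\ the left-hand side of \eqref{eq:basic_voronoi}.

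Second, applied to $g = w\nilp{\zeta}$, I would use the matrix identity $\am{\gamma} w = \cent{\gamma}\, w\, \am{\gamma^{-1}}$ followed by $\am{\gamma^{-1}}\nilp{\zeta} = \nilp{\gamma^{-1}\zeta}\am{\gamma^{-1}}$ to rewrite each summand as $W_{\phi}(\cent{\gamma} w \nilp{\gamma^{-1}\zeta} \am{\gamma^{-1}})$. Because $\omega_{\tilde{\pi}}$ is a Hecke character, it is trivial on the diagonally embedded $\Q^{\times}$, so the factor $\cent{\gamma}$ drops out. A change of variable $\gamma \mapsto \gamma^{-1}$ then gives
\begin{equation}
\phi(w\nilp{\zeta}) = \sum_{\gamma \in \Q^{\times}} W_{\phi}\left(w \nilp{\gamma\zeta} \am{\gamma}\right). \nonumber
\end{equation}

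Finally, to match this with the right-hand side of \eqref{eq:basic_voronoi}, a direct matrix computation shows
\begin{equation}
w \cdot {}^t\!\left(\am{\gamma} \left(\begin{matrix} 1 & 0 \\ -\zeta & 1\end{matrix}\right)\right)^{-1} = w \nilp{\zeta\gamma^{-1}} \am{\gamma^{-1}}, \nonumber
\end{equation}
so by definition $\tilde{W}_{\phi}\!\left(\am{\gamma}\left(\begin{matrix} 1 & 0 \\ -\zeta & 1\end{matrix}\right)\right) = W_{\phi}\!\left(w\nilp{\zeta\gamma^{-1}}\am{\gamma^{-1}}\right)$, and the same substitution $\gamma \mapsto \gamma^{-1}$ identifies the right-hand side of \eqref{eq:basic_voronoi} with the sum in the previous display. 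The only delicate step is the disappearance of $\cent{\gamma}$, which rests on the automorphy of the central character; everything else is routine algebra with the Iwasawa decomposition.
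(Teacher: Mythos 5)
Your proof is correct, and it reorganizes the argument in a way that differs from the paper's. The paper evaluates $\phi\bigl(\nilp{\zeta}\bigr)$ in two ways by introducing the auxiliary automorphic form $\imath\phi(g)=\phi({}^tg^{-1})$, writing down \emph{its} Whittaker expansion at ${}^t\nilp{\zeta}^{-1}$, and then proving $W_{\imath\phi}=\tilde{W}_{\phi}$ by a short Fourier-integral computation (using $nw=w\,{}^tn^{-1}$ and left invariance under $w$). You instead stay with $\phi$ itself, invoke $\phi\bigl(\nilp{\zeta}\bigr)=\phi\bigl(w\nilp{\zeta}\bigr)$, and massage each summand of the expansion at $w\nilp{\zeta}$ by hand via $\am{\gamma}w=\cent{\gamma}w\am{\gamma^{-1}}$, discarding $\cent{\gamma}$ because $\omega_{\tilde{\pi}}$ is trivial on $\Q^{\times}$; the substitution $\gamma\mapsto\gamma^{-1}$ and the identity $w\,{}^t\bigl(\am{\gamma}\left(\begin{smallmatrix}1&0\\-\zeta&1\end{smallmatrix}\right)\bigr)^{-1}=w\nilp{\zeta\gamma^{-1}}\am{\gamma^{-1}}$ then match the two sides term by term (I checked these matrix identities; they are right). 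The two routes are reparametrizations of the same computation: the paper's version isolates the reusable fact $W_{\imath\phi}=\tilde{W}_{\phi}$ and never needs the central character in the lemma itself, while yours trades that integral identity for explicit Bruhat-type algebra at the cost of invoking automorphy of $\omega_{\tilde{\pi}}$ one step earlier than the paper does (the paper only uses it later, when factoring the right-hand side). Both are complete proofs.
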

This is essentially \cite[Theorem~3.1]{Te_vor}. Similar identities are also used in \cite[Section~3]{Co14}.
\begin{proof}
We start by writing down the Whittaker expansion for $\phi$ with respect to $\psi$:
\begin{eqnarray}
	\phi\left(\left(\begin{matrix} 1&\zeta \\ 0&1 \end{matrix}\right)\right)&=& \sum_{\gamma\in\Q^{\times}}  W_{\phi}\left(\left(\begin{matrix} \gamma&\gamma \zeta \\ 0&1 \end{matrix}\right)\right) = \sum_{\gamma\in \Q^{\times}}  W_{\phi}\left(\left(\begin{matrix} 1&\gamma \zeta \\ 0&1 \end{matrix}\right)\left(\begin{matrix} \gamma&0 \\ 0&1 \end{matrix}\right)\right) \nonumber\\
	&=& \sum_{\gamma\in \Q^{\times}} \psi(\gamma\zeta)W_{\phi}\left(\left(\begin{matrix} \gamma&0 \\ 0&1 \end{matrix}\right)\right).\nonumber
\end{eqnarray}

Then we observe that
\begin{equation}
	\phi\left(\left(\begin{matrix} 1&\zeta \\ 0&1 \end{matrix}\right)\right) = [\imath \phi] \left(\left(\begin{matrix} 1&0\\ -\zeta&1 \end{matrix}\right)\right). \nonumber
\end{equation}
where $\imath \phi(g) = \phi({}^tg^{-1})$. 

We finish the proof by writing down the Whittaker expansion of $\imath\phi$ with respect to $\psi$:
\begin{eqnarray}
	[\imath \phi] \left(\left(\begin{matrix} 1&0\\ -\zeta&1 \end{matrix}\right)\right) &=& \sum_{\gamma\in\Q^{\times}} W_{\imath\phi}\left(\left(\begin{matrix} \gamma &0\\ 0&1 \end{matrix}\right)\left(\begin{matrix} 1&0\\ -\zeta&1 \end{matrix}\right)\right)\nonumber \\
	 &=& \sum_{\gamma\in \Q^{\times}} \tilde{W}_{\phi}\left(\left(\begin{matrix} \gamma &0\\ 0&1 \end{matrix}\right)\left(\begin{matrix} 1&0\\ -\zeta&1 \end{matrix}\right)\right) .\nonumber
\end{eqnarray}	

It is an easy calculation to check $\tilde{W}_{\phi}= W_{\imath\phi}$. Indeed,
\begin{eqnarray}
	\tilde{W}_{\phi}(g) &=& W_{\phi}(w{}^tg^{-1}) = \int_{N(\Q)\setminus N(\A)}\phi(nw{}^tg^{-1})\overline{\psi(n)}dn \nonumber \\
	&=& \int_{N(\Q)\setminus N(\A)}\phi(w{}^tn^{-1}{}^tg^{-1})\overline{\psi(n)}dn = \int_{N(\Q)\setminus N(\A)}\phi(w{}^t(ng)^{-1})\overline{\psi(n)}dn\nonumber \\
	&=& \int_{N(\Q)\setminus N(\A)}\phi({}^t(ng)^{-1})\overline{\psi(n)}dn = W_{\imath\phi}(g). \nonumber
\end{eqnarray}
\end{proof}

We will now proceed by choosing $\zeta$ and $\phi$ such that the left hand side takes the desired shape. In our case this choice is motivated by our application to the subconvexity problem. The next step will be to compute the right hand side as explicit as possible.

\subsection{Setting up the left hand side}

We choose $\phi$ such that 
\begin{equation}
	W_{\phi} = \prod_{p\leq \infty} W_{\phi,p}
\end{equation}
is a pure tensor.  Thus, we can treat each place on its own. 

Since the Kirillov model of $\tilde{\pi}_{\infty}$ contains the space of Schwartz functions we can choose $W_{\phi,\infty}$ such that, for all $\gamma\in \R^{\times}$, we have
\begin{equation}
	W_{\phi,\infty}\left(\left(\begin{matrix} \gamma&0 \\ 0&1 \end{matrix}\right)\right) = \omega_{\tilde{\pi},\infty}(\gamma)\abs{\gamma}_{\infty}^{\frac{1}{2}}W_{\infty}(\gamma), \nonumber
\end{equation}
for $W_{\infty}\in \mathcal{C}_c^{\infty}(\R^{\times})$ with support in $\R_+$.

At all the finite places $p\nmid lN$ we choose $\phi$ such that $W_{\phi,p}$ is
 the spherical $\psi_{p}$-Whittaker new vector $W_{\tilde{\pi},p}$ of $\tilde{\pi}_{p}$ normalized such that $W_{\tilde{\pi},p}(1)=1$. Indeed, for $\gamma\in \Q_p^{\times}$,
\begin{eqnarray}
	W_{\phi,p}(a(\gamma)) =W_{\tilde{\pi},p}(a(\gamma )) = \begin{cases} \lambda_{\tilde{\pi}}(p^{v_p(\gamma)})\abs{\gamma}_{p}^{\frac{1}{2}} &\text{ if } v_{p}(\gamma)\geq 0, \\ 0 &\text{ else.} \end{cases} \nonumber
\end{eqnarray}

If $p\neq l$ divides the level $N$, we will consider three cases. Recall from \cite[Lemma~2.5]{Sa15_2} that, for $k\in \Z$ and $v\in \Z_p^{\times}$,
\begin{equation}
	W_{\tilde{\pi},p}(a(\varpi_{p}^kv)) = \begin{cases}\xi(\varpi_{p}^k)p^{-k} &\text{ if $k\geq 0$ and $\tilde{\pi}_{p}=\xi \otimes St$ with $a(\xi)=0$},\\
	\omega_{\tilde{\pi},p}(v)\chi_1(\varpi_{p}^{l})p^{-\frac{k}{2}} &\text{ if $k\geq 0$ and $\tilde{\pi}_{p}=\chi_1\boxplus\chi_2$ for $a(\chi_1) >a(\chi_2)=0$,}\\
	\omega_{\tilde{\pi},p}(v) &\text{ if $k=0$ and $L(\tilde{\pi}_{p},s)=1$ },\\
	0 &\text{ else,}\end{cases} \nonumber
\end{equation}
where $W_{\tilde{\pi},p}$ is the normalized  $\psi_{p}$-Whittaker new vector of $\tilde{\pi}_{p}$. We set
\begin{equation}
	W_{\phi,p}(g) = W_{\tilde{\pi},p}(g) \text{ if $p\neq l$ divides $N$.}\nonumber
\end{equation}

At the place $l$ we choose $\phi$ so that
\begin{equation}
	W_{\phi,l}(a(\gamma)) = \omega_{\tilde{\pi},l}(\gamma)\abs{\gamma}_{l}^{\frac{1}{2}}W_{l}(\gamma), \nonumber
\end{equation}
for a smooth (i.e. locally constant) function $W_l\colon \Q_l^{\times}\to \C$ with support in $\Z_p^{\times}$. As in the archimedean case this is possible because the Kirillov model of $\tilde{\pi}_l$ contains the space of Schwartz-Bruhat functions, which in this case are exactly the smooth compactly supported functions on $\Q_{l}^{\times}$. 

We still have to pick $\zeta$. We define $\zeta_{\infty}=0$ and set
\begin{equation}
	\zeta_{\text{fin}} = \left(\zeta_0, \zeta_0, \zeta_0, \cdots \right),  \nonumber
\end{equation}
for $\zeta_0\in \Q$ such that $v_l(\zeta_0)\geq 0$. With this choice we have
\begin{equation}
	\psi(\zeta m) = \psi_{\text{fin}}(\zeta_{\text{fin}}m) \psi_{\infty}(0) = \psi_{\text{fin}}(\zeta_{\text{fin}}m) = \psi_{\infty}\left(-\zeta_0 m\right). \nonumber
\end{equation}
for every $m\in \Q$.

We conclude that the left hand side of \eqref{eq:basic_voronoi} (with our choice of $\phi$) equals
\begin{equation}
	\sum_{\gamma\in\Q^{\times}} \psi(\gamma\zeta)W_{\phi}\left(\left(\begin{matrix} \gamma&0 \\ 0&1 \end{matrix}\right)\right) = \sum_{\substack{m\in \N}} \lambda_{\pi}\left(\frac{m}{(m,l^{\infty})}\right) \psi_{\infty}(-\zeta_0 m) W_{\infty}(m)W_{l}(m). \nonumber
\end{equation}

\subsection{Computing the right hand side}

With the choices made above Lemma~\ref{lm:basic_whitt_id} yields the identity
\begin{eqnarray}
	\sum_{\substack{m\in \N}} \lambda_{\pi}\left(\frac{m}{(m,l^{\infty})}\right) \psi_{\infty}(-\zeta_0 m) W_{\infty}(m)W_{l}(m) = \sum_{\gamma\in \Q^{\times}}\tilde{W}_{\phi}\left(\left(\begin{matrix} \gamma&0 \\ 0&1 \end{matrix}\right)\left(\begin{matrix} 1&0 \\-\zeta&1 \end{matrix}\right)\right). \nonumber
\end{eqnarray}
We want to compute the right hand side as explicit as possible. To this end we observe that
\begin{eqnarray}
	&&\tilde{W}_{\phi}\left(\left(\begin{matrix} \gamma&0 \\ 0&1 \end{matrix}\right)\left(\begin{matrix} 1&0 \\-\zeta&1 \end{matrix}\right)\right) = W_{\phi}\left(w\left(\begin{matrix} \gamma^{-1}&0 \\ 0&1 \end{matrix}\right)\left(\begin{matrix} 1&\zeta \\0&1 \end{matrix}\right)\right) \nonumber \\
	&& = W_{\phi}\left(\left(\begin{matrix} 1&0 \\ 0&\gamma^{-1} \end{matrix}\right)w\left(\begin{matrix} 1&\zeta \\0&1 \end{matrix}\right)\right) =   \prod_{p\leq \infty} W_{\phi,p} \left( \left(\begin{matrix} \gamma & 0 \\ 0 &1\end{matrix} \right)w\left(\begin{matrix} 1 & \zeta  \\ 0 &1\end{matrix} \right)\right).  \label{eq:factorisation_rhs} 
\end{eqnarray}
The first equality follows directly from the definition and the second one uses $\Q$-invariance of the central character. The upshot is that we can do the remaining computations place by place.

\subsubsection{The unramified places $p\nmid lN$}

In this case we have $W_{\phi,p}(a(\gamma)) = W_{\tilde{\pi},p}(a(\gamma))$ and $\tilde{\pi}_p$ is unramified. Thus, if $v_{p}(\zeta_{p})\geq 0$, we obtain
\begin{eqnarray}
	W_{\phi,p} \left( \left(\begin{matrix} \gamma & 0 \\ 0 &1\end{matrix} \right)w\left(\begin{matrix} 1 & \zeta_{p}  \\ 0 &1\end{matrix} \right)\right) &=& W_{\tilde{\pi},p} \left( \left(\begin{matrix} \gamma & 0 \\ 0 &1\end{matrix} \right)\right) \nonumber\\
	 &=& \begin{cases}
		\abs{\gamma}_{p}^{\frac{1}{2}}\lambda_{\tilde{\pi}}(p^{v_{p}(\gamma)}) &\text{ if } \gamma \in \Z_{p}, \\
		0 &\text{ else. } 
	\end{cases} \nonumber
\end{eqnarray}
If $v_{p}(\zeta_{p}) <0$, the simple computation
\begin{equation}
	w\left( \begin{matrix} 1&\zeta_{p} \\ 0&1 \end{matrix} \right) = \left( \begin{matrix} 1&0 \\ -\zeta_{p}&1 \end{matrix} \right)w = \left( \begin{matrix} 1&-\zeta_{p}^{-1} \\ 0&1 \end{matrix} \right)\left( \begin{matrix} \zeta_{p}^{-1} &0 \\ 0&-\zeta_{p} \end{matrix} \right)\left( \begin{matrix} 0&1 \\ 1&-\zeta_{p}^{-1} \end{matrix} \right)w \nonumber
\end{equation}
implies
\begin{equation}
	\left(\begin{matrix} \gamma & 0 \\ 0 &1\end{matrix} \right)w\left(\begin{matrix} 1 & \zeta_{p}  \\ 0 &1\end{matrix} \right) = \left( \begin{matrix} 1& -\gamma \zeta_{p}^{-1} \\ 0 &1 \end{matrix} \right)\left( \begin{matrix} \gamma \zeta_{p}^{-1}&0 \\ 0 &1 \end{matrix} \right)\left( \begin{matrix} -1& 0 \\ -1 &-\zeta_{p} \end{matrix} \right). \nonumber
\end{equation}
Thus we arrive at
\begin{eqnarray}
	&&W_{\phi,p} \left( \left(\begin{matrix} \gamma & 0 \\ 0 &1\end{matrix} \right)w\left(\begin{matrix} 1 & \zeta_{p}  \\ 0 &1\end{matrix} \right)\right) = \psi_{p}(-\gamma \zeta_{p}^{-1}) W_{\phi,p} \left(a( \gamma\zeta_{p}^{-1}) \left(\begin{matrix} -1 & 0 \\ -1 &-\zeta_{p}\end{matrix} \right)\right) \nonumber\\
	&&\quad = \psi_{p}(-\gamma \zeta_{p}^{-1})\omega_{\tilde{\pi},{p}}(-\zeta_{p}) W_{\phi,p} \left( a( \gamma\zeta_{p}^{-2}) \left(\begin{matrix} 1 & 0 \\ \zeta_{p}^{-1} &1\end{matrix} \right)\right).\label{eq:old_action}
\end{eqnarray}
By right-$K_{p}$-invariance, the expression above simplifies to
\begin{eqnarray}
	&&W_{\phi,p} \left( \left(\begin{matrix} \gamma & 0 \\ 0 &1\end{matrix} \right)w\left(\begin{matrix} 1 & \zeta_{p}  \\ 0 &1\end{matrix} \right)\right) = \psi_{p}(-\gamma \zeta_{p}^{-1})\omega_{\tilde{\pi},{p}}(-\zeta_{p}) W_{\tilde{\pi},p} \left( \left(\begin{matrix} \gamma\zeta_{p}^{-2} & 0 \\ 0 &1\end{matrix} \right) \right) \nonumber \\
	&&\qquad = \begin{cases}
		\psi_{p}(-\gamma\zeta_{p}^{-1})\omega_{\tilde{\pi},{p}}(-\zeta_{p})\abs{\frac{\gamma}{\zeta_{p}^2}}_{p}^{\frac{1}{2}}\lambda_{\tilde{\pi}}(p^{v_{p}(\gamma\zeta_{p}^{-2})}) &\text{ if } v_{p}(\gamma\zeta_{p}^{-2})\geq 0, \\
		0 &\text{ else.} 
	\end{cases} \nonumber
\end{eqnarray}

\subsubsection{The ramified non-archimedean places $p\mid N$, $p\neq l$}

We now turn to the ramified places. The presence of arbitrary additive twists requires a careful analysis of ramified Whittaker new vectors. We carry this out by using several results from \cite{As17_1}.    

For $t\in \Z$, $l\in\N_0$ and $v\in \Z_p^{\times}$ we define
\begin{equation}
	g_{t,l,v} = \left(\begin{matrix} \varpi_{p}^t & 0 \\ 0 &1\end{matrix} \right)w\left(\begin{matrix} 1 & v\varpi_{p}^{-l} \\ 0 &1\end{matrix} \right) = \left(\begin{matrix} 0 & \varpi_{p}^t \\ -1 & -v\varpi_{p}^{-l}\end{matrix} \right). \nonumber
\end{equation}
Then we observe that the matrix at which we want to evaluate $W_{\phi,p}$ is
\begin{equation}
	\left(\begin{matrix} \gamma & 0 \\ 0 &1\end{matrix} \right)w\left(\begin{matrix} 1 & \zeta_{p}  \\ 0 &1\end{matrix} \right) = \begin{cases}
		g_{t,0,u^{-1}} \left(\begin{matrix} 1 & \zeta_{p}-1 \\ 0 &u\end{matrix} \right) &\text{ if $v_{p}(\zeta_{p})\geq 0$ and $\gamma = u\varpi_{p}^t$,}  \\
		g_{t,l,u^{-1}v} \left(\begin{matrix} 1 & 0 \\ 0 &u\end{matrix} \right) &\text{ if $\zeta_{p}=v\varpi_{p}^{-l}$ and $\gamma = u\varpi_{p}^t$.}
	\end{cases} \nonumber
\end{equation}
Since the matrices on the right are always in $K_{1,p}(\infty)$ we can use the finite Fourier expansion (better known as $c_{t,l}(\mu)$-expansion) to calculate the value of $W_{\phi,p}$ explicitly. This has been studied extensively in \cite{As17_1}.

Let $n_{p}=v_{p}(N)$. Then we treat several subcases which feature different behavior. We set 
\begin{equation}
	N_0 = \prod_{\substack{p\mid N,\\ -v_{p}(\zeta_{p})\leq 0}} p^{n_{p}}, N_1 = \prod_{\substack{p\mid N,\\ 0<-v_{p}(\zeta_{p})<n_{p}}} p^{n_{p}} \text{ and }  N_2= \prod_{\substack{p\mid N,\\ n_{p}\leq -v_{p}(\zeta_{p})}} p^{n_{p}}. \label{eq:def_Ns}
\end{equation}

In order to use the results from \cite{As17_1} we have to re-normalize our representation $\pi_{p}$. To do so we fix an unramified character $\xi_{p}$ such that $\omega_{\xi_p^{-1}\pi_{p}}(\varpi_{p})=1$. 

If $p\mid N_0$, we have
\begin{eqnarray}
	&&W_{\phi,p} \left( a(\gamma)w\left(\begin{matrix} 1 & \zeta_{p}  \\ 0 &1\end{matrix} \right)\right) = \xi_{p}(\gamma) W_{\xi_{p}^{-1}\tilde{\pi}_{p}}(g_{v_{p}(\gamma),0,u^{-1}}) = \xi_{p}(\gamma)c_{v_{p}(\gamma),0}(1) \nonumber \\
	&& \qquad = \begin{cases}
		\epsilon(\frac{1}{2},\pi_{p}) \abs{\gamma \varpi_{p}^{n_{p}}}_{p}^{\frac{1}{2}} \lambda_{\tilde{\pi}}(p^{v_{p}(\gamma)+n_{p}}) &\text{ if } v_p(\gamma N)\geq 0, \\
		0 &\text{ else.} 
	\end{cases} \nonumber
\end{eqnarray}
This follows from the explicit evaluation of $c_{t,0}(1)$ given in \cite{As17_1}. For a complete classification of the constants $c_{t,l}(\mu_p)$ see Appendix~\ref{app:ctlv}.

\begin{rem}
The case $v_{p}(\zeta_{p})\geq 0$ at ramified places can be treated in general using the theory of Atkin-Lehner operators. This leads the same result as our $c_{t,0}(1)$ approach. See \cite[Section~6]{Te_vor} for details.
\end{rem}

If $p\mid N_1$ the situation is slightly more complicated. We define
\begin{equation}
	\mathcal{E}_{p}(\gamma,\zeta_{p}) = \abs{\gamma}_{p}^{-\frac{1}{2}}W_{\xi^{-1}_{p}\tilde{\pi}_{p}}(g_{t,l,u^{-1}v}). \label{eq:def_E}
\end{equation}
We have the following result towards the support of these coefficients.
\begin{lemma}
For $v_{p}(\gamma) < \min(2v_{p}(\zeta_{p}),-n_{p}+v_{p}(\zeta_{p}))$ we have
\begin{equation}
	\mathcal{E}_{p}(\gamma,\zeta_{p}) = 0. \nonumber
\end{equation}
\end{lemma}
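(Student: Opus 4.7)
The plan is to reduce the lemma to a support statement for the coefficients $c_{t,l}(\mu)$ of the ramified Whittaker newvector developed in \cite{As17_1} and tabulated in Appendix~\ref{app:ctlv}. Setting $t=v_p(\gamma)$ and $l=-v_p(\zeta_p)>0$, the very same Bruhat-style decomposition already used at the unramified places gives
\begin{equation}
	\left(\begin{matrix} \gamma & 0 \\ 0 & 1 \end{matrix}\right) w \left(\begin{matrix} 1 & \zeta_p \\ 0 & 1 \end{matrix}\right) = \left(\begin{matrix} 1 & -\gamma\zeta_p^{-1} \\ 0 & 1 \end{matrix}\right) \left(\begin{matrix} -\zeta_p & 0 \\ 0 & -\zeta_p \end{matrix}\right) \left(\begin{matrix} \gamma\zeta_p^{-2} & 0 \\ 0 & 1 \end{matrix}\right) \left(\begin{matrix} 1 & 0 \\ \zeta_p^{-1} & 1 \end{matrix}\right), \nonumber
\end{equation}
with the final factor lying in $K_{1,p}(l)$ because $v_p(\zeta_p^{-1})=l>0$. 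Applied to the matrix $g_{t,l,u^{-1}v}$ this rewrites $\mathcal{E}_p(\gamma,\zeta_p)$, up to unimodular constants coming from $\psi_p$ and $\omega_{\xi_p^{-1}\tilde\pi,p}$, as a value of $W_{\xi_p^{-1}\tilde{\pi}_p}$ on a matrix of the form $a(y)\kappa$ with $v_p(y)=t+2l$ and $\kappa\in K_{1,p}(l)\setminus K_{1,p}(n_p)$.

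Because we are in the regime $p\mid N_1$, i.e.\ $0<l<n_p$, right-invariance of the newvector under the nebentypus does not apply to $\kappa$ directly. Instead I would invoke the finite Fourier expansion
\begin{equation}
	W_{\xi_p^{-1}\tilde{\pi}_p}(g_{t,l,v}) = \sum_{\mu} c_{t,l}(\mu)\,\bar{\mu}(v) \nonumber
\end{equation}
of \cite{As17_1}, where $\mu$ runs over a finite set of characters of $\Z_p^{\times}$ of bounded conductor. The vanishing of $\mathcal{E}_p(\gamma,\zeta_p)$ then reduces to the simultaneous vanishing of every $c_{t,l}(\mu)$ appearing in the sum.

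At this point I would simply read off the bound from the explicit evaluations collected in Appendix~\ref{app:ctlv}. Case by case in the shape of $\tilde{\pi}_p$ (twist of Steinberg, ramified principal series, or supercuspidal) and in $a(\mu)$, the coefficient $c_{t,l}(\mu)$ is expressed as a local epsilon-factor or Gauss sum times a normalizing power of $p$; nonvanishing of this expression forces either $t\geq -2l$, coming from the Iwasawa decomposition geometry of $g_{t,l,v}$, or $t\geq -n_p+v_p(\zeta_p)$, coming from matching the Kirillov-model support of the newvector against the conductor of $\tilde{\pi}_p$. Below the smaller of the two thresholds every contribution vanishes, which is precisely the claim.

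The main obstacle is organizational rather than conceptual: the classification of $c_{t,l}(\mu)$ has a number of subcases and one must verify that the bound $\min(2v_p(\zeta_p),-n_p+v_p(\zeta_p))$ holds uniformly in $\mu$. The delicate branches are the ramified principal series when $\chi_1\vert_{\Z_p^\times}=\chi_2\vert_{\Z_p^\times}$ and the interplay between $a(\mu)$ and the relative sizes of $l$ and $n_p$, but Appendix~\ref{app:ctlv} should already package everything so that the uniform bound can be extracted directly.
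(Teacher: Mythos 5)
Your proposal is correct in substance but routes through different explicit formulas than the paper does. The paper's proof is a one-line citation to the direct evaluations of $W_{\pi_p}(g_{t,l,v})$ in \cite[Lemmas~3.1, 3.3--3.6]{As17_1}, reading the support off from those closed forms; you instead go through the finite Fourier expansion $W(g_{t,l,v})=\sum_{\mu}c_{t,l}(\mu)\mu(v)$ and the tables of Appendix~\ref{app:ctlv}. Both reduce to the same source, and your route does work, but two remarks. First, the opening Bruhat-style decomposition is a detour: $\mathcal{E}_p(\gamma,\zeta_p)$ is \emph{by definition} $\abs{\gamma}_p^{-1/2}W_{\xi_p^{-1}\tilde\pi_p}(g_{t,l,u^{-1}v})$ with $t=v_p(\gamma)$ and $l=-v_p(\zeta_p)$, so the $c_{t,l}(\mu)$-expansion applies immediately and the observation about $K_{1,p}(l)$ versus $K_{1,p}(n_p)$ plays no role. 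Second, the vanishing mechanism is not the $\epsilon$-factor entries of the tables but the normalization
\begin{equation}
	c_{t,l}(\mu_p) = c_{p}(\pi_{p},l,t,\mu_p)\,\zeta_{p}(1)\,p^{-\frac{l+t+a(\mu_p\pi_p)}{2}}\,\lambda_{\chi_{\mu_p}\pi}\bigl(p^{\,t+a(\mu_p\pi_{p})+\delta_{\mu_p\pi_{p}}}\bigr), \nonumber
\end{equation}
whose Hecke-eigenvalue factor kills the term unless $t\geq -(a(\mu_p\tilde\pi_p)+\delta_{\mu_p\tilde\pi_p})$. The lemma therefore reduces to the uniform bound $a(\mu_p\tilde\pi_p)+\delta_{\mu_p\tilde\pi_p}\leq\max(2l,\,n_p+l)$ for all $\mu_p\in{}_p\mathfrak{X}_l$, which is exactly $-\min(2v_p(\zeta_p),-n_p+v_p(\zeta_p))$. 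You flag this verification as the remaining work but do not carry it out; it does hold (the worst case $n_p+l$ arises for principal series $\chi_1\boxplus\chi_2$ with $a(\chi_2)<l<a(\chi_1)$, and the cases $\delta=1,2$ are covered because $l\geq1$ in the $N_1$ regime), so the gap is fillable, but it is the entire content of the lemma and should be spelled out case by case rather than attributed loosely to ``Iwasawa decomposition geometry''.
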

\begin{proof}
This follows directly from the explicit formulas given in \cite[Lemma~3.1, 3.3, 3.4, 3.5 and 3.6]{As17_1}.
\end{proof}

Thus, we can write
\begin{eqnarray}
	W_{\phi,p} \left( a(\gamma)w\left(\begin{matrix} 1 & \zeta_{p}  \\  0 &1\end{matrix} \right)\right)  = \begin{cases}
		\abs{\gamma}_{p}^{\frac{1}{2}}\xi_{p}(\gamma)\mathcal{E}_{p}(\gamma,\zeta_{p}),  &\text{ if } v_{p}(\gamma) \geq -n_{p}+v_{p}(\zeta_{p}), \\ 
		0 &\text{ else.}
	\end{cases}\nonumber
\end{eqnarray}

If, for the global application, it is not necessary to keep track of $N_1$ dependence it can be useful to expand $\mathcal{E}_{p}(\gamma,\zeta_{p})$ in terms of $c_{t,l}(\mu_p)$. We will follow this path later on. However, if one is interested in keeping track of possible cancellation coming from these places, one has to work more carefully. In this scenario one can obtain completely explicit formulas involving $p$-adic oscillations if one evaluates $W_{\xi^{-1}_{p}\tilde{\pi}_{p}}$. Such evaluations have been given in \cite{As17_1} in several special cases.

Finally, if $p\mid N_2$, we make the following observation.
\begin{lemma} \label{lm:high_b_part}
Let $-\nu(\zeta_{p})\geq n_{p}$ then
\begin{eqnarray}
	&&W_{\tilde{\pi},p} \left( \left(\begin{matrix} \gamma & 0 \\ 0 &1\end{matrix} \right)w\left(\begin{matrix} 1 & \zeta_{p}  \\ 0 &1\end{matrix} \right)\right) \nonumber \\
	&&\quad  = \begin{cases}
		\omega_{\pi,p}\left(-\zeta_{p}\gamma^{-1}\right) \psi_{p}\left(-\gamma\zeta_{p}^{-1}\right)\abs{\gamma \zeta_{p}^{-2}}_{\nu}^{\frac{1}{2}}\lambda_{\pi}(p^{v_{\p}(\gamma \zeta_{p}^{-2})}), &\text{ if }v_{p}(\gamma\zeta_{p}^{-2})\geq 0,\\
		0 &\text{ else.}
	\end{cases} \nonumber
\end{eqnarray}
\end{lemma}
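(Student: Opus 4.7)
The strategy is to mimic the unramified computation from \eqref{eq:old_action} step by step, using the extra strength of the hypothesis $-v_p(\zeta_p) \geq n_p$ to absorb the leftover off-diagonal unipotent factor into the invariance group of the new vector.

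First I would observe that the matrix identity used in the unramified case is purely algebraic and in no way used the ramification type of $\pi_p$. So, exactly as there, for $\zeta_p \neq 0$ one has
\begin{equation}
	\left(\begin{matrix} \gamma & 0 \\ 0 & 1\end{matrix}\right) w \left(\begin{matrix} 1 & \zeta_p \\ 0 & 1\end{matrix}\right) = n(-\gamma \zeta_p^{-1})\, z(-\zeta_p)\, a(\gamma \zeta_p^{-2}) \left(\begin{matrix} 1 & 0 \\ \zeta_p^{-1} & 1\end{matrix}\right). \nonumber
\end{equation}
Applying the Whittaker transformation law on the left and pulling out the central character yields
\begin{equation}
	W_{\tilde{\pi},p}\!\left(a(\gamma) w\, n(\zeta_p)\right) = \psi_p(-\gamma\zeta_p^{-1})\,\omega_{\tilde{\pi},p}(-\zeta_p)\, W_{\tilde{\pi},p}\!\left(a(\gamma\zeta_p^{-2}) \left(\begin{matrix} 1 & 0 \\ \zeta_p^{-1} & 1\end{matrix}\right)\right). \nonumber
\end{equation}

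The second step is where the hypothesis $-v_p(\zeta_p)\geq n_p$ enters decisively. It forces $\zeta_p^{-1} \in \varpi_p^{n_p}\Z_p$, so the matrix $\bigl(\begin{smallmatrix} 1 & 0 \\ \zeta_p^{-1} & 1\end{smallmatrix}\bigr)$ belongs to $K_{1,p}(n_p)$. The normalized new vector $W_{\tilde{\pi},p}$ of the ramified representation $\tilde{\pi}_p$ (of conductor $p^{n_p}$) is by definition right-invariant under $K_{1,p}(n_p)$, so we may drop this factor entirely. This is the conceptual reason the formula in the lemma coincides structurally with the unramified formula obtained from \eqref{eq:old_action}: the ramification at $p$ never sees the off-diagonal twist.

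It remains to evaluate $W_{\tilde{\pi},p}(a(\gamma\zeta_p^{-2}))$. For this I would invoke the explicit case-by-case formulas for the Whittaker new vector on the diagonal recalled from \cite{Sa15_2} in Section~\ref{sec:not} (twists of Steinberg by unramified characters; principal series $\chi_1\boxplus\chi_2$ with $a(\chi_2)=0<a(\chi_1)$; and the vanishing statement for the supercuspidal case outside $k=0$). The support condition in each case collapses to the single requirement $v_p(\gamma\zeta_p^{-2})\geq 0$ stated in the lemma, and the explicit values rewrite, after using the identity $\omega_{\tilde{\pi},p}=\omega_{\pi,p}^{-1}$ and the relation between $\lambda_\pi$ and $\lambda_{\tilde{\pi}}$ at $p$, as $|\gamma\zeta_p^{-2}|_p^{1/2}\lambda_\pi(p^{v_p(\gamma\zeta_p^{-2})})$ times the prefactor in the claim.

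The only nontrivial book-keeping is in this last step: verifying uniformly across the three admissible types of $\tilde{\pi}_p$ that the product of the character $\omega_{\tilde{\pi},p}(-\zeta_p)$ and the residual character in the newform formula reorganizes into $\omega_{\pi,p}(-\zeta_p\gamma^{-1})$. This is purely algebraic and driven by the support condition $v_p(\gamma\zeta_p^{-2})\geq 0$, but it is the step one has to execute carefully. Everything else is literally the unramified calculation restricted to the subgroup where $W_{\tilde{\pi},p}$ is trivially equivariant.
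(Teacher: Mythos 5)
Your proposal is correct, and it takes a genuinely different route from the paper's. The paper's own proof is a three-step sketch through the $g_{t,l,v}$-machinery: relate $g_{t,l,v}$ with $l\geq n_p$ to $g_{*,n_p,*}$, pass to the Atkin--Lehner translate via \cite[Lemma~2.18, Proposition~2.28]{Sa15_2}, and read off the value from the coefficients $c^*_{*,0}(1)$ tabulated in the appendix. You instead reuse verbatim the matrix decomposition \eqref{eq:old_action} from the unramified case and observe that the hypothesis $-v_p(\zeta_p)\geq n_p$ places the residual factor $\bigl(\begin{smallmatrix}1&0\\ \zeta_p^{-1}&1\end{smallmatrix}\bigr)$ inside $K_{1,p}(n_p)$, under which the new vector is right-invariant by definition (note $a(\tilde{\pi}_p)=a(\pi_p)=n_p$). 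This is more elementary and self-contained, and it makes conceptually transparent why the answer has exactly the unramified shape; the paper's route has the advantage of staying inside the uniform $c_{t,l}(\mu)$-framework it needs anyway for the places dividing $N_1$. The one step you flag but do not execute --- the character bookkeeping --- does close: writing $\gamma\zeta_p^{-2}=u\varpi_p^m$ with $u\in\Z_p^{\times}$, the diagonal formulas give $W_{\tilde{\pi},p}(a(\gamma\zeta_p^{-2}))=\omega_{\tilde{\pi},p}(u)\abs{\gamma\zeta_p^{-2}}_p^{1/2}\lambda_{\tilde{\pi}}(p^m)$, and using $\omega_{\tilde{\pi},p}=\omega_{\pi,p}^{-1}$ together with $\lambda_{\tilde{\pi}}(p^m)=\omega_{\pi,p}(\varpi_p^m)^{-1}\lambda_{\pi}(p^m)$ one finds
\begin{equation}
	\omega_{\tilde{\pi},p}(-\zeta_p)\,\omega_{\tilde{\pi},p}(u)\,\omega_{\pi,p}(\varpi_p^m)^{-1}=\omega_{\pi,p}\bigl(-\zeta_p\cdot\gamma\zeta_p^{-2}\bigr)^{-1}=\omega_{\pi,p}(-\zeta_p\gamma^{-1}), \nonumber
\end{equation}
which is precisely the prefactor in the lemma, uniformly over the three ramification types.
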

\begin{proof}
For $l\geq n$ we have the decomposition
\begin{equation}
	g_{t,l,v} \left(\begin{matrix}	0 & 1 \\ p^{n} & 0 \end{matrix}\right) = n(-v^{-1}p^{t+l})z(vp^{n-l})g_{t-n+2l,0,v^2}\left( \begin{matrix} 1 & 1+v^{-1}p^{l-n} \\ 0 & -v^{-2} \end{matrix} \right).\nonumber
\end{equation}
Thus, using  \cite[Lemma~2.17, Corollary~2.27, Proposition~2.28]{Sa15_2} the claimed expression is reduced to the evaluation of $c_{t-n+2l,0}(1)$ given in the appendix.
\end{proof}

This completes the treatment for ramified non-archimedean places away from $l$ for now.

\subsubsection{The special place $p=l$}

At this place we are dealing with a Whittaker function which is not necessarily a new vector. To evaluate this function away from the diagonal we will use the \textit{local functional equation}. Note that this approach enables us to include a non-archimedean test function at any place $l$. In particular, we can treat the case when $\pi$ ramifies at $l$.

We define
\begin{equation}
	Z(W,s,\mu_p)=\int_{\Q_{p}^{\times}}W(a(y))\mu_p(y)\abs{y}_{p}^{s-\frac{1}{2}}d^{\times}y, \nonumber
\end{equation}
for a multiplicative character $\mu_p\in{}_p\mathfrak{X}$, a Schwartz-Bruhat function $W$, and some complex number $s$ with sufficiently large real part. Then the local functional equation is
\begin{equation}
	\frac{Z(W,s,\mu_p)}{L(s,\mu_p\tilde{\pi}_{p})}\epsilon(s,\mu_p\tilde{\pi}_{p})=\frac{Z(\tilde{\pi}_p(w)W,1-s,\mu_p^{-1}\omega_{\tilde{\pi},p}^{-1})}{L(1-s,\mu_p^{-1}\pi_{p})}. \nonumber
\end{equation}
Since $\psi_{p}$ is unramified, we have
\begin{equation}
	\epsilon(s,\mu_p\tilde{\pi}_{p}) =p^{(\frac{1}{2}-s)a(\mu_p\tilde{\pi}_p)}\epsilon(\frac{1}{2},\mu_p\tilde{\pi}_p). \nonumber
\end{equation} 
The upshot is, that the latter $\epsilon$-factors have absolute value 1.

Recall that we want to evaluate
\begin{equation}
	W_{\phi,p} \left( \left(\begin{matrix} \gamma & 0 \\ 0 &1\end{matrix} \right)w\left(\begin{matrix} 1 & \zeta_{p}  \\ 0 &1\end{matrix} \right)\right). \nonumber
\end{equation}
Thus, we define $W=\tilde{\pi}_p(n(\zeta_p))W_{\phi,p}$ so that the local functional equation reads
\begin{eqnarray}
	&&\int_{\Q_{p}^{\times}}W_{\phi,p}(a(y)wn(\zeta_p))[\omega_{\tilde{\pi},p}\mu_p]^{-1}(y)\abs{y}_{p}^{\frac{1}{2}-s}d^{\times}y\nonumber \\
	&&\qquad = \epsilon(s,\mu_p\tilde{\pi}_{p})\frac{L(1-s,\mu_p^{-1}\pi_{p})}{L(s,\mu_p\tilde{\pi}_{p})}Z(W,s,\mu_p).\nonumber \nonumber
\end{eqnarray}
The latter $Z$-integral can be computed, because on the diagonal $W_{\phi,p}$ is given by $W_l$. To do so we will apply $p$-adic Mellin inversion to this formula. Recall that the Mellin transform is defined by
\begin{equation}
	[\mathfrak{M}f](\mu_p\abs{\cdot}_{p}^s) = [\mathfrak{M}f](\mu_p,s) = \int_{\Q^{\times}_p}f(y)\mu_p(y)\abs{y}_{p}^sd^{\times}y . \nonumber
\end{equation}
The inverse Mellin transform is given by
\begin{equation}
	[\mathfrak{M}^{-1}\tilde{f}](y) = \frac{\log(p)}{2\pi}\sum_{\mu_p\in {}_p\mathfrak{X}}\mu_p(y)^{-1}\int_{-\frac{\pi}{\log(p)}}^{\frac{\pi}{\log(p)}} \tilde{f}(\mu_p,it)\abs{y}_{p}^{-it}dt. \nonumber
\end{equation}
Indeed, see \cite[Proposition~7.1.4]{Co17}, these transforms satisfy
\begin{equation}
	\mathfrak{M}^{-1}\circ \mathfrak{M} = \mathfrak{M} \circ\mathfrak{M}^{-1} = 1. \nonumber
\end{equation}
It will be useful for us to split the inverse transform into two pieces. We define the \textit{pre-Mellin inversion} by
\begin{equation}
	[\mathfrak{M}_{\text{pre}}^{-1}\tilde{f}](\mu_p,y) = \frac{\log(p)}{2\pi}\int_{-\frac{\pi}{\log(p)}}^{\frac{\pi}{\log(p)}} \tilde{f}(\mu_p,it)\abs{y}_{p}^{-it}dt. \nonumber
\end{equation}
This leads to the definition
\begin{equation}
	B_{\pi_{p},\kappa}(y)=\frac{\log(p)}{2\pi}\int_{-\frac{\pi}{\log(p)}}^{\frac{\pi}{\log(p)}} \frac{L(1-\kappa-it,\tilde{\pi}_{p})}{L(\kappa+it,\pi_{p})}\abs{y}_{p}^{-it}dt. \nonumber
\end{equation}
Indeed, $B_{\pi_p}$ turns out to be a very valuable $p$-adic special function in this context. For example, if $L(s,\pi_p)=1=L(s,\tilde{\pi}_p)$, then
\begin{equation}
	B_{\pi_{p},\kappa}(y) = \mathbbm{1}_{\Z_{p}^{\times}}(y^{-1}), \quad\text{for every } \kappa\in\R. \label{eq:nice_support_for_simplereps}
\end{equation}
The other extreme appears for $\tilde{\pi}_p$ unramified. In this case we have
\begin{equation}
	B_{\pi_{p},\frac{1}{2}}(y)=\begin{cases}
		\abs{y}_{p}^{-\frac{1}{2}}P_{\pi}(-v_p(y)) & \text{ if $y^{-1}\in \Z_{p}$,}\\
		p^{-\frac{1}{2}}(\omega_{\pi,p}(\varpi_{p})p^{-1}\lambda_{\tilde{\pi}}(p)-\lambda_{\pi}(p)) &\text{ if $y^{-1}\in \varpi_{p}^{-1}\Z^{\times}_{p}$,} \\
		\omega_{\pi,p}(\varpi_{p})p^{-1} &\text{ if $y^{-1}\in \varpi_{p}^{-2}\Z_{p}^{\times}$},\\
		0 &\text{ else,}
	\end{cases} \label{eq:formula_unram_reps}
\end{equation}
with $$P_{\pi}(v_p(y)) = \lambda_{\tilde{\pi}}(p^{v_{p}(y)})-p^{-1}\lambda_{\pi}(p)\lambda_{\tilde{\pi}}(p^{1+v_{p}(y)})+\omega_{\pi,p}(\varpi_{p})p^{-2}\lambda_{\tilde{\pi}}(p^{2+v_{p}(y)}).$$

We are finally ready to evaluate $W_{\phi,p}$. The assumption $\text{supp}(W_{l})\subset\Z_{p}^{\times}$ makes our life a lot easier. Indeed,
\begin{equation}
	[\mathfrak{M}W](\mu_p,s) = [\mathfrak{M}W_{l}](\mu_p,0) \text{ for all} s\in \C. \nonumber
\end{equation}
Furthermore, we make the simplifying assumption $v_l(\zeta_p)\geq 0$.
The local functional equation set up as above reads
\begin{eqnarray}
	&&[\mathfrak{M} W_{\phi,p}(a(\cdot)wn(\zeta_p))](\mu_p, it) \nonumber \\
	&&\qquad = \epsilon(\frac{1}{2}-it,\mu_p^{-1} \pi_{p})\frac{L(\frac{1}{2}+it,\mu_p\tilde{\pi}_{p})}{L(\frac{1}{2}-it,\mu_p^{-1}\pi_{p})} \nonumber [\mathfrak{M}W_{l}](\mu_p,0). \nonumber
\end{eqnarray}
In this situation we can compute the pre-Mellin inversion explicitly in terms of $B_{\mu_p\tilde{\pi}_{p},\Re(s)}$.  After completing the process of Mellin inversion we arrive at
\begin{equation}
	W_{\phi,p}(a(y)wn(\zeta_p)) = \sum_{\mu_p\in {}_p\mathfrak{X}}\mu_p(y^{-1})B_{\mu_p^{-1}\pi_{p},\frac{1}{2}}(\varpi_{p}^{-a(\mu_p\tilde{\pi}_{p})}y^{-1})\epsilon(\frac{1}{2},\mu_p^{-1}\pi_{p})[\mathfrak{M}W_l^{\omega_{\pi,p}}](\mu_p). \nonumber
\end{equation}
This defines a $p$-adic version of the Hankel transform. We define
\begin{equation}
	\mathcal{H}W_{l}(y) = \abs{y}_{p}^{-\frac{1}{2}}\sum_{\mu_p\in {}_p\mathfrak{X}}\mu_p(y^{-1})B_{\mu_p^{-1}\pi_{p},\frac{1}{2}}(\varpi_{p}^{-a(\mu_p\tilde{\pi}_{p})}y^{-1})\epsilon(\frac{1}{2},\mu_p^{-1}\pi_{p})[\mathfrak{M}W_l](\mu_p). \label{eq:def_p_adic_bessel}
\end{equation} 
Thus, we have 
\begin{equation}
	W_{\phi,p}(a(y)wn(\zeta_{l})) =  \abs{y}_{p}^{\frac{1}{2}}\mathcal{H}W_{l}(y). \nonumber
\end{equation}
We will encounter a similar formula at the archimedean places.  The $p$-adic Hankel transform has the following properties.

\begin{lemma}\label{lm:properties_p_adc_Besel}
If for some $\kappa\geq 1$
\begin{equation}
	W_l(x+yl^{\kappa}) = W_l(x) \text{ for all } x\in \Z_l^{\times}, y\in \Z_l, \nonumber
\end{equation}
then one can restrict the $\mu_p$-sum in \eqref{eq:def_p_adic_bessel} to $\mu_p\in {}_l\mathfrak{X}_{\kappa}$. Furthermore,
\begin{equation}
	\text{supp}(\mathcal{H}W_l)\subset \varpi_l^{\min(-2\kappa, -a(\pi_l))}\Z_l. \nonumber	
\end{equation}
\end{lemma}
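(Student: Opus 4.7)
The plan is to verify both assertions directly from the definition \eqref{eq:def_p_adic_bessel} of $\tilde{W}_l$.

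For the first claim I use that $W_l$, and hence $W_l^{\omega_{\pi,l}}$, is supported on $\Z_l^\times$, so
\begin{equation}
[\mathfrak{M}W_l^{\omega_{\pi,l}}](\mu^{-1}) = \int_{\Z_l^\times}W_l^{\omega_{\pi,l}}(y)\mu^{-1}(y)d^{\times}y. \nonumber
\end{equation}
I partition $\Z_l^\times$ into cosets of $1+\varpi_l^\kappa\Z_l$. The invariance hypothesis makes $W_l^{\omega_{\pi,l}}$ constant on each coset, while orthogonality of characters on the compact group $1+\varpi_l^\kappa\Z_l$ gives $\int_{1+\varpi_l^\kappa\Z_l}\mu^{-1}(u)d^{\times}u=0$ whenever $\mu^{-1}|_{1+\varpi_l^\kappa\Z_l}$ is non-trivial, i.e.\ precisely when $a(\mu)>\kappa$. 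Consequently the Mellin transform vanishes outside ${}_l\mathfrak{X}_{\kappa}$, reducing the $\mu$-sum in \eqref{eq:def_p_adic_bessel} to one over ${}_l\mathfrak{X}_{\kappa}$.

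For the support claim I examine, term by term in the now-finite $\mu$-sum, the function $y\mapsto B_{\mu\tilde{\pi}_l,\frac{1}{2}}(\varpi_l^{-a(\mu\tilde{\pi}_l)}y^{-1})$. The ratio $L(\frac{1}{2}-it,\mu^{-1}\pi_l)/L(\frac{1}{2}+it,\mu\tilde{\pi}_l)$ under the contour integral defining $B$ is a Laurent series of the form $\sum_{k\geq -d_\mu}r_k l^{kit}$, where $d_\mu$ is the number of Euler factors of $L(s,\mu\tilde{\pi}_l)$ and in particular $d_\mu\leq 2$; integration over a fundamental period then selects the single coefficient $r_{-v_l(z)}$, so $B_{\mu\tilde{\pi}_l,\frac{1}{2}}(z)$ is forced to vanish unless $v_l(z)\leq d_\mu$. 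Translating back via $z=\varpi_l^{-a(\mu\tilde{\pi}_l)}y^{-1}$ yields the necessary condition $v_l(y)\geq -a(\mu\tilde{\pi}_l)-d_\mu$.

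It remains to verify, for every $\mu\in{}_l\mathfrak{X}_{\kappa}$, that this lower bound is at least $\min(-2\kappa,-a(\pi_l))$. This I check case by case along the local classification of $\pi_l$, using the standard twisting inequality $a(\mu\pi_l)\leq\max(2a(\mu),a(\pi_l))$ in the supercuspidal and Steinberg cases and a careful accounting of $a(\mu\tilde{\pi}_l)+d_\mu$ in the principal series case; summing the contributions then gives $\text{supp}(\tilde{W}_l)\subset\varpi_l^{\min(-2\kappa,-a(\pi_l))}\Z_l$. The hard part will be this last case analysis in the ramified principal series subcase where the conductor of $\mu$ is comparable to the conductors of the characters defining $\pi_l$, since there a drop in $a(\mu\tilde{\pi}_l)$ can be offset by an extra Euler factor appearing in $L(s,\mu\tilde{\pi}_l)$; a short book-keeping argument will be needed to check that the sum $a(\mu\tilde{\pi}_l)+d_\mu$ never exceeds $\max(2\kappa,a(\pi_l))$.
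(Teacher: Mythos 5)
Your treatment of the first assertion coincides with the paper's: decompose $\Z_l^{\times}$ into cosets of $1+\varpi_l^{\kappa}\Z_l$, use the invariance hypothesis to pull $W_l^{\omega_{\pi,l}}$ out of each coset integral, and kill $[\mathfrak{M}W_l^{\omega_{\pi,l}}](\mu^{-1})$ for $a(\mu)>\kappa$ by orthogonality. For the second assertion you also follow the route the paper gestures at (read the support off $B_{\mu\tilde{\pi}_{l},\frac{1}{2}}$), and your analysis of $B$ is correct: the $\mu$-term of \eqref{eq:def_p_adic_bessel} is supported on $v_l(y)\geq -a(\mu\tilde{\pi}_l)-d_{\mu}$ with $d_{\mu}=\deg L(s,\mu\tilde{\pi}_l)$, and the extreme Laurent coefficient $r_{-d_{\mu}}$ is a nonzero multiple of the product of the inducing parameters, so this range is attained. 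Note also that distinct $\mu\in{}_l\mathfrak{X}$ have distinct restrictions to $\Z_l^{\times}$, so the $\mu$-components cannot cancel on a fixed shell $v_l(y)=m$; the support of $\tilde{W}_l$ is therefore exactly the union of these ranges over the $\mu$ with nonvanishing Mellin coefficient.

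The gap is that the inequality you defer to ``book-keeping,'' namely $a(\mu\tilde{\pi}_l)+d_{\mu}\leq\max(2\kappa,a(\pi_l))$ for all $\mu\in{}_l\mathfrak{X}_{\kappa}$, is false in exactly the subcase you flag as hard. Take $\pi_l=\chi_1\boxplus\chi_2$ with $a(\chi_1)=n$ and $a(\chi_2)=0$. For $\mu=1$ one gets $a(\tilde{\pi}_l)+d_{1}=n+1$, and for a generic $\mu$ with $1\leq a(\mu)=j<n$ one gets $a(\mu\tilde{\pi}_l)+d_{\mu}=n+j$; both exceed $\max(2\kappa,n)=n$ once $n\geq 2\kappa$. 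By the no-cancellation remark, for such a $\pi_l$ and a test function whose Mellin transform is nonzero at an offending $\mu$, the function $\tilde{W}_l$ genuinely lives outside $\varpi_l^{\min(-2\kappa,-a(\pi_l))}\Z_l$, so no sharper estimate can rescue the statement as written. The fix is to restrict the lemma to the situation in which it is applied, $\pi_l=\chi_l\abs{\cdot}^{\kappa_1}\boxplus\chi_l\abs{\cdot}^{\kappa_2}$ with $a(\chi_l)=\frac{n_l}{2}\geq\kappa$ (more generally, to $\pi_l$ for which $L(s,\mu\tilde{\pi}_l)=1$ unless $\mu\tilde{\pi}_l$ is unramified): there either $d_{\mu}=0$ and $a(\mu\tilde{\pi}_l)=2a(\mu\chi_l^{-1})\leq\max(2\kappa,a(\pi_l))$, or $\mu\chi_l^{-1}$ is unramified and $a(\mu\tilde{\pi}_l)+d_{\mu}=2\leq 2\kappa$. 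Alternatively one must weaken the exponent (an extra $-\kappa$ suffices in general). Your case analysis cannot close as promised without one of these modifications.
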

\begin{proof}
The first statement is a simple consequence of the following computation. For $\mu_p$ satisfying $a(\mu_p)>\kappa$ we have
\begin{equation}
	[\mathfrak{M}W_l](\mu_p) = \sum_{x\in \Z_{l}^{\times}/(1+l^{\kappa}\Z_l)} W_l(x)\mu_p(x) \int_{1+l^{\kappa}\Z_l} \mu_p(y)d^{\times} y = 0. \nonumber
\end{equation}
The second statement follows from the first one together with the support properties of $B_{\mu_p^{-1}\pi_{l},\frac{1}{2}}$.
\end{proof}

\subsubsection{The archimedean places}

At $\infty$ the action of the element $w$ in the archimedean Kirillov model is given by the Hankel transform:
\begin{eqnarray}
	&&W_{\phi, \infty} \left( \left(\begin{matrix} \gamma & 0 \\ 0 &1\end{matrix} \right)w\left(\begin{matrix} 1 & \zeta_{\infty}  \\ 0 &1\end{matrix} \right)\right) =  W_{\phi, \infty} \left( \left(\begin{matrix} \gamma & 0 \\ 0 &1\end{matrix} \right)w\right)  \nonumber \\
	&&\qquad =  \int_{\R^{\times}}  j_{\tilde{\pi},\infty}(x\gamma) \abs{x}_{\infty}^{\frac{1}{2}}W_{\infty}(x) \frac{dx}{x}. \nonumber 
\end{eqnarray}
The function $j_{\tilde{\pi},{\infty}}$ can be computed explicitly and it turns out that, if $\tilde{\pi}_{\infty}$ is a discrete series representation of weight $k\geq 2$ with central character $\text{sgn}^k$,
\begin{eqnarray}
	j_{\tilde{\pi},{\infty}}(y) = \begin{cases}
		2\pi i^k \sqrt{y} J_{k-1}(4\pi \sqrt{y}) &\text{ if } y>0, \\
		0 &\text{ if } y<0.
	\end{cases} \nonumber
\end{eqnarray}
If $\tilde{\pi}_{\infty}=\abs{\cdot}^{ir}\boxplus\abs{\cdot}^{-ir}$, then we have  
\begin{eqnarray}
	j_{\tilde{\pi},{\infty}}(y) = \begin{cases}
		i\pi\sqrt{y}\frac{J_{i2r}(4\pi\sqrt{y})-J_{-i2r}(4\pi\sqrt{y})}{\sinh(\pi r)}&\text{ if } y>0, \\
		4\cosh(\pi r)\sqrt{\abs{y}}K_{i2r}(4\pi \sqrt{\abs{y}}) &\text{ if } y<0.
	\end{cases} \nonumber
\end{eqnarray}
These expressions also hold for complementary series $\tilde{\pi}_{\infty}$, which appear when $r$ is imaginary.
To shorten notation later on we write 
\begin{equation}
\mathcal{H}_{\pm}W_{\infty}(y) = \int_{\R_{>0}}\mathcal{J}_{\infty,\kappa}^{\pm}(4\pi\sqrt{xy})W_{\infty}(x)dx, \label{eq:def_hankelarch}
\end{equation}
for $y>0$. Where we set
\begin{eqnarray}
	\mathcal{J}_{\infty,\kappa}^{\pm}(y) &=& \frac{4\pi}{y}j_{\tilde{\pi},\infty}\left(\pm\frac{y^2}{16\pi^2}\right)  \nonumber
\end{eqnarray}
and $\kappa$ is $k-1$ in the case of discrete series and $2r$ for principal series or complementary series. We choose this notation to be compatible with \cite{BM15}. In particular, at infinity, we have
\begin{eqnarray}
	W_{\phi,\infty} \left( \left(\begin{matrix} \gamma & 0 \\ 0 &1\end{matrix} \right)w\left(\begin{matrix} 1 & \zeta_{\infty}  \\ 0 &1\end{matrix} \right)\right) = \abs{\gamma}_{\infty}^{\frac{1}{2}} \mathcal{H}_{\sgn(\gamma)}W_{\infty}(\abs{\gamma}). \nonumber
\end{eqnarray}

\subsection{Summary}

The following proposition summarizes our findings from the previous subsections.

\begin{prop}\label{pr:rough_vor}
Let $\pi$ be a cuspidal automorphic representation with conductor $Nl^{n_l}$ and central character $\omega_{\pi}$. Furthermore, let $\frac{a}{b}\in \Q$, $W_{\infty}\in \mathcal{C}_0^{\infty}(\R_+)$, and for some prime $l\nmid b$  let $W_l\in \mathcal{S}(\Q_l)$ with support in $\Z_l^{\times}$. We define $N_0$, $N_1$ and $N_2$ as in \eqref{eq:def_Ns}. Furthermore, we set
\begin{eqnarray}
	b_1 &=& (b,N_1), \quad b_2 = (b,N_2^{\infty}), \quad b_0 = \frac{b}{b_1b_2}, \nonumber\\
	\eta(\pi,a,b) &=& \prod_{p\mid N_0} \epsilon(\frac{1}{2},\pi_{p})\prod_{p\mid b_0N_2} \omega_{\pi,p}(-ab), \text{ and }\nonumber \\
	\mathcal{E}(m,\frac{a}{b}) &=& \prod_{p\mid N_1 }\xi_p\left(\frac{m}{b_1N_1}\right) \mathcal{E}_p\left(\frac{m}{b_0^2b_2^2N_0b_1N_1},\frac{a}{b}\right), \nonumber
\end{eqnarray}
where the local function $\mathcal{E}_p$ is defined in \eqref{eq:def_E}. Then
\begin{eqnarray}
	&&\sum_{m\in\N} e\left(-\frac{a}{b}m\right)\lambda_{\pi}\left(\frac{m}{(m,l^{\infty})}\right)W_{\infty}(m)W_l(m)  \nonumber \\
	&&= \frac{\eta(\pi,a,b)}{b_0b_2\sqrt{N_0}}\sum_{c\in \Z}\sum_{\substack{m\in\Z,\\(m,l)=1}} e\left(l^cm\frac{\overline{a N_0N_1}}{b_0b_2}\right) \lambda_{\pi^{N_2}}\left(\frac{m}{(m,N_1^{\infty})}
	\right)\nonumber \\  
	&&\qquad \qquad \cdot\mathcal{H}_{\sgn(m)}W_{\infty}\left(\frac{l^c\abs{m}}{b_0^2b_2^2b_1N_1N_0}\right)\mathcal{H}W_{l}\left(\frac{l^c\abs{m}}{b_0^2b_2^2b_1N_1N_0}\right)\mathcal{E}\left(l^cm,\frac{a}{b}\right).\nonumber
\end{eqnarray}
\end{prop}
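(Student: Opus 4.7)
The plan is to apply Lemma~\ref{lm:basic_whitt_id} with our pure-tensor $\phi$ and with $\zeta=\zeta_{\text{fin}}$ corresponding to $\zeta_0=a/b$; the LHS is then precisely the sum on the left of the proposition, and everything reduces to evaluating the RHS by plugging in the local computations from the previous subsections.

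First I would use the factorization \eqref{eq:factorisation_rhs} to rewrite the RHS as
\begin{equation*}
\sum_{\gamma\in\Q^\times}\prod_{p\leq\infty}W_{\phi,p}\!\left(\am{\gamma}\,w\,\nilp{a/b}\right),
\end{equation*}
and then substitute the explicit evaluations case by case: the two unramified cases at $p\nmid lNb$ and $p\mid b_0$, the three ramified subcases $p\mid N_0,N_1,N_2$, the $l$-adic Hankel transform $\tilde{W}_l$, and the archimedean Hankel transform $\tilde{W}_{\infty,\pm}$. Combining the local support conditions---namely $v_p(\gamma)\geq 0$ for $p\nmid lNb$, $v_p(\gamma)\geq -2v_p(b)$ for $p\mid b_0$ or $p\mid N_2$, $v_p(\gamma N)\geq 0$ for $p\mid N_0$, and $v_p(\gamma)\geq -n_p-v_p(b)$ for $p\mid N_1$---forces $\gamma$ to take the form $\gamma = l^c m/(b_0^2 b_2^2 b_1 N_1 N_0)$ for some $c\in\Z$ and some nonzero integer $m$ coprime to $l$. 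The half-integral local absolute values telescope against this denominator to produce the prefactor $1/(b_0 b_2\sqrt{N_0})$, the archimedean and $l$-adic Hankel factors take their stated rescaled arguments, and the summation over $\gamma$ is reindexed as a double sum over $c$ and $m$.

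The main obstacle is the phase bookkeeping. The central-character values $\omega_{\tilde\pi,p}(-\zeta_p)$ from $p\mid b_0$, $\omega_{\pi,p}(-\zeta_p/\gamma)$ from $p\mid N_2$, and the epsilon factors $\epsilon(\tfrac12,\pi_p)$ from $p\mid N_0$ gather into the global factor $\eta(\pi,a,b)$, while the ramified Whittaker data $\xi_p(\gamma)\mathcal{E}_p(\gamma,\zeta_p)$ from $p\mid N_1$ assembles into $\mathcal{E}(l^cm,a/b)$. The additive characters $\psi_p(-\gamma/\zeta_p)$ at the places $p\mid b_0 b_2$ must be combined via the Chinese remainder theorem into the single exponential $e(l^c m\,\overline{aN_0N_1}/(b_0b_2))$; verifying prime-by-prime that the modular inverse of $aN_0N_1$ modulo $b_0b_2$ is exactly what appears---rather than some neighbouring residue---is the trickiest step. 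Lastly, the Hecke eigenvalues from the finite places outside $lN_1$ collapse into a single $\lambda_{\pi^{N_2}}(m/(m,N_1^\infty))$, using that the definition $\pi^{N_2}=(\prod_{p\mid N_2}\chi_{\omega_{\pi,p}^{-1}})\pi$ is precisely tailored to absorb $\lambda_{\tilde\pi}(p^{\cdot})$ at unramified $p$ and $\lambda_\pi(p^{\cdot})$ at $p\mid N_2$ into one eigenvalue of the twisted representation, via the identity $\tilde\pi_p=\omega_{\pi,p}^{-1}\pi_p$.
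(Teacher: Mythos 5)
Your proposal is correct and is exactly the argument the paper intends: the paper presents Proposition~\ref{pr:rough_vor} only as a ``summary'' of the preceding subsections, i.e.\ as the result of inserting the local evaluations (unramified, $N_0$, $N_1$, $N_2$, the $l$-adic and archimedean Hankel transforms) into Lemma~\ref{lm:basic_whitt_id} via the factorization \eqref{eq:factorisation_rhs}, with the support conditions forcing the reindexing $\gamma=l^c m/(b_0^2b_2^2b_1N_1N_0)$ and the phases and normalizations collecting into $\eta$, $\mathcal{E}$, the CRT exponential, and the prefactor, just as you describe. You have also correctly identified the only genuinely delicate points (the CRT phase bookkeeping and the absorption of the $N_2$-eigenvalues into $\pi^{N_2}$), so nothing is missing relative to the paper's own treatment.
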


This proposition is already a very robust tool with many interesting features. However, it has the caveat that the contribution from the places $p\mid N_1$ is hidden in the mysterious term $\mathcal{E}$. In order to make our formula more suitable for applications we will now unfold this error using local Fourier analysis. 

\begin{theorem} \label{th:vor_with_Cs}
Under the assumptions of Proposition~\ref{pr:rough_vor} we have 
\begin{eqnarray}
	&&\sum_{m\in\N} e\left(-\frac{a}{b}m\right)\lambda_{\pi}\left(\frac{m}{(m,l^{\infty})}\right)W_{\infty}(m)W_l(m)  \nonumber \\
	&&=\zeta_{N_1}(1) \frac{\eta(\pi,a,b)}{b_0b_2\sqrt{N_0}}\sum_{c\in \Z}\sum_{\du{\mu}\in \prod_{p\mid b_1}{}_p\mathfrak{X}_{v_p(b_1)}} \frac{\du{\mu}(\frac{ab_0b_2N_0N'_1(\du{\mu})}{b_1l^c})}{\sqrt{b_1N_1'(\du{\mu})}}\sum_{m_1\mid N_1^{\infty}}C(\pi_{N_1},\du{\mu},b_1,m_1)\nonumber \\ 	&&\qquad \cdot \lambda_{(\pi^{N_2})_{\du{\mu}}}\left(m_1\frac{N_1'(\du{\mu})}{b_1N_1}\right)\sum_{\substack{m\in\Z,\\(m,lN_1)=1}} e\left(l^cm_1m\frac{\overline{a N_0N_1}}{b_0b_2}\right) \lambda_{(\pi^{N_2})_{\du{\mu}}}\left(m
	\right)\nonumber \\  
	&&\qquad \qquad \qquad \qquad\cdot\mathcal{H}_{\sgn(m)}W_{\infty}\left(\frac{l^cm_1\abs{m}}{b_0^2b_2^2b_1N_1N_0}\right)\mathcal{H}W_{l}\left(\frac{l^cm_1m}{b_0^2b_2^2b_1N_1N_0}\right).\nonumber
\end{eqnarray}
For some constants $C(\pi_{N_1},\du{\mu},b_1,m_1)\in \C$ satisfying
\begin{equation}
	\abs{C(\pi_{N_1},\du{\mu},b_1,m_1)} \ll_{N_1} m_1^{\frac{7}{64}+\epsilon}.\label{eq:estimat_C}
\end{equation}
\end{theorem}
\begin{proof}
The idea, taken from \cite[(11)]{Sa15_2}, is to expand
\begin{equation}
	W_{\xi^{-1}_p\tilde{\pi}_p}(g_{t,k,v}) = \sum_{\mu_p\in{}_p\mathfrak{X}_k}c_{t,k}(\mu_p)\mu_p(v), \nonumber
\end{equation}
for each $p\mid N_1$. The constants $c_{t,k}(\mu_p)$ depend on the underlying representation $\pi_p$ and have been described in Appendix~\ref{app:ctlv}. Using these expansions we can write
\begin{eqnarray} 
	\mathcal{E}(l^cm,\frac{a}{b}) &=&  \zeta_{N_1}(1)\sum_{\du{\mu}\in \prod_{p\mid b_1}{}_p\mathfrak{X}_{v_p(b_1)}} \frac{\du{\mu}(\frac{ab_0b_2N_1N_0}{ml^c})}{\sqrt{b_1N_1'(\du{\mu})}}    \lambda_{\chi_{\du{\mu}}\tilde{\pi}}\left(\frac{N_1'(\du{\mu})}{b_1N_1}m\right)   \nonumber \\
	&& \qquad\qquad \qquad \qquad\qquad \qquad \qquad \qquad \cdot C(\pi_{N_1},\du{\mu},b_1,(m,N_1^{\infty})), \nonumber
\end{eqnarray}
for
\begin{eqnarray}
	N_1'(\du{\mu}) &=& \prod_{p\mid N_1}p^{a(\mu_p\tilde{\pi}_p)+\delta_{\mu_p\tilde{\pi}_p}}, \nonumber \\
	C(\pi_{N_1},\du{\mu},b_1,(m,N_1^{\infty})) &=& \prod_{p\mid N_1} c_p\left(\tilde{\pi}_{p},v_{p}(b_1),v_p(\frac{m}{b_1N_1}),\mu_p\right)\xi_p^{-1}(N_1'(\du{\mu}))p^{\frac{\delta_{\mu_p\tilde{\pi}_p}}{2}}. \nonumber
\end{eqnarray}
Inserting this expression in Proposition~\ref{pr:rough_vor} completes the proof of the stated expression. The bound on the coefficients $C(\pi_{N_1},\du{\mu},b_1,m_1)\in \C$ can be read off from \eqref{eq:bound_cp} together with the current best possible results towards the Ramanujan conjecture. See for example \cite{BB11}.
\end{proof}

\section{Application to the subconvexity problem}\label{sec:proof_of_main_th}

In this section we will prove Theorem~\ref{th:main_th_2}. In doing so we will closely stick to \cite{BM15} and assume some familiarity with the arguments therein. From now on $\pi$ will denote a cuspidal automorphic representation of conductor $Nl^{n_l}$. We are interested in 
\begin{equation}
	L : =\sum_{m\in \Z} \lambda_{\pi}(m)F(\frac{m}{M}), \nonumber
\end{equation}
for a smooth function $F$ with support in $[1,2]$ satisfying $F^{(j)}\ll_j Z^j$ for some $Z\geq 1$. 

We will restrict our attention to  $\pi_l = \chi_l\abs{\cdot}^{\kappa_1}\boxplus \chi_l\abs{\cdot}^{\kappa_2}$ for some $\chi_l\in {}_l\mathfrak{X}_{\frac{n_l}{2}}'$ and $n_l\geq 10$ even. In particular, there is a Hecke character $\chi=\prod_{p\leq \infty} \chi_p$ such that $\pi = \chi \otimes \pi_0$ for some automorphic representation $\pi_0$ which is unramified at $l$. This implies that for all $(m,l)=1$ we have
\begin{equation}
	\lambda_{\pi}(m) = \chi_l(m)^{-1}\lambda_{\pi_0}(m). \label{eq:translating_osciallation}
\end{equation}

\subsection{Sketch of proof}

Before we start to estimate $L$ in detail let us briefly discuss the approach taken in \cite{BM15}. This should serve as orientation throughout the rest of this rather technical section. The approach is based on Jutila's method. For a more detailed discussion see \cite[Section~1 and~4]{BM15}.

The first step is to split $L$ into arithmetic progressions
\begin{equation}
		L  = \sum_{a,b,t} \sum_{\substack{m\in \Z, \\
		m\equiv \frac{b}{a}\alpha_{\chi_l}\  (l^t)}} \lambda_{\pi}(m)F(\frac{m}{M}), \nonumber
\end{equation} 
where $\alpha_{\chi_l}\in\Z_l^{\times}$ is defined by \eqref{eq:char_log}. On these arithmetic progressions the $p$-adic analytic oscillation of $\chi_l(m)$, which in turn models the $p$-adic behavior of $\lambda_{\pi}(m)$ via \eqref{eq:translating_osciallation}, can be modelled by an exponential with a well behaved phase. Furthermore, due to the $p$-adic Dirichlet approximation theorem given in \cite[Theorem~3]{BM15}, the fraction $\frac{a}{b}$ as well as the modulus $l^t$ are controlled by several parameters that will be chosen later. The result of this step is \eqref{eq:result_step_1} which corresponds to \cite[(5.5)]{BM15}. This can be thought of as a $p$-adic Farey dissection. Indeed, we are left with a double sum involving $p$-adic Farey fractions $\frac{a}{b}$ and an arithmetic progression $m\equiv \frac{b}{a}\alpha_{\chi_l} \ (l^t)$ of large modulus.

The second step is to dualise the $m$-sum by applying Vorono\"i summation. After evaluating the resulting $p$-adic and archimedean Fourier-type integrals using the method of stationary phase, this is the content of Lemma~\ref{lm:p-adic_stat} and Lemma~\ref{lm:arch_stat} below, we arrive at \eqref{eq:end_Step_2} which nicely compares to \cite[(5.15)]{BM15}. At this stage we are roughly left with an average of short exponential sums weighted by Hecke eigenvalues, where the average is taken over the Farey fractions $\frac{a}{b}$. Indeed, we need to estimate sums like
\begin{equation}
	\sum_{m}\sum_{a,b} \frac{\lambda_{\sigma}(m)}{m^{\frac{1}{4}}}\kappa_{a,b}(m), \nonumber
\end{equation} 
for  some at $l$ unramified twist $\sigma$ of $\pi_0$ and an oscillatory function $\kappa_{a,b}$. More precisely, the function $\kappa_{a,b}(\cdot)$ consists of an archimedean exponential with linear phase, some harmless weights and the $l$-adic oscillation defined in \eqref{eq:l-adic_oscilation} below. Note that it is this step where we use the Vorono\"i formula from Theorem~\ref{th:vor_with_Cs}, which produces some overhead coming from those Farey fractions where $(b,N)\neq 1$. This is the source for most of the technical notation in this section. However, the oscillatory integrals that arise at $l$ and $\infty$ are essentially those appearing in \cite{BM15} and we can reduce their evaluation to \cite[Lemma~3 and~4]{BM15}.

The third step is to apply the Cauchy-Schwarz inequality. As a result we are able to separate the automorphic weights from the oscillator term $\kappa_{a,b}$. After opening the square and exchanging order of summation we are left with something like
\begin{equation}
	\left(\sum_m \frac{\abs{\lambda(m)}^2}{m^{\frac{1}{2}}}\right)^{\frac{1}{2}}\left(\sum_{\substack{a_1,b_1,\\ a_2,b_2}}\sum_m \kappa_{a_1,b_1}(m)\overline{\kappa_{a_2,b_2}(m)}\right)^{\frac{1}{2}}. \nonumber
\end{equation}
Here the second $m$-sum is a short exponential sum depending on the Farey fraction $\frac{a_1}{b_1}$ and $\frac{a_2}{b_2}$. The precise formula, after eliminating the first $m$-sum using the Ranking-Selberg bound, appears in  \eqref{eq:part_of_step3} extending \cite[(5.17)]{BM15}. It is here where it is possible to extract cancellation on average. In particular, we obtain diagonal terms, those where $\abs{a_1b_1-a_2b_2}_l\ll 1$, that are estimated trivially and off-diagonal therms where extra cancellation will be obtained. Even though the idea is taken directly from \cite{BM15} we are still plagued by the luggage acquired in the previous step, which makes the implementation slightly technical.

The final step is to bound the exponential sums 
\begin{equation}
	\Xi_{a_1,a_2,b_1,b_2}=\sum_m\kappa_{a_1,b_1}\overline{\kappa_{a_2,b_2}}, \nonumber
\end{equation} 
for $\abs{a_1b_1-a_2b_2}\gg 1$, non-trivially. This will be done by utilizing a second derivative $p$-adic van der Corput estimate given in \cite[Theorem~5]{BM15}. The result of this analysis is summarised in Lemma~\ref{lm:after_vdC} which is our analogue of \cite[Lemma~5]{BM15}. We can reduce the proof to the result \cite[Lemma~13]{BM15} which establishes properties of the $p$-adic oscillation which are essential for the application of the second derivative test. After applying Lemma~\ref{lm:after_vdC} we are in the same situation as in \cite[Section~5.5]{BM15} which can be followed to conclude the proof.

We now start with a detailed execution of the strategy outlined above.

\subsection{$p$-Adic Farey dissection}

In this subsection we perform the first step described above. This relies on \cite[Theorem~3]{BM15} which we now recall.

\begin{theorem}
Let $\alpha\in \Z_{l}^{\times}$, $q\in \N$ and an integer $-q\leq r\leq q$ be given. Write $r^+=\max(r,0)$ and $r^-=\max(-r,0)$, and let
\begin{equation}
	S=\{ (a,b,k)\in \Z\times \N\times \N_0 \vert b\leq l^{k+2r^-}, \abs{a}\leq l^{k+2r^+}, (a,b)=(a,l)=(b,l)=1 \}. \nonumber
\end{equation}
For $(a,b,k)\in S$, let
\begin{equation}
	\Z_{l}^{\times}[a,b,k]=\{ m\in \Z_l^{\times} \vert b\alpha/m-a \in l^{q+\abs{r}+k}\Z_l\}. \nonumber
\end{equation}
Then there exists a subset $S^0\subset S$ such that
\begin{equation}
	\Z_l^{\times} = \bigsqcup_{(a,b,k)\in S^0} \Z_{l}^{\times}[a,b,k] \nonumber
\end{equation}
and in addition the following two properties hold: if $(a,b,k_1), (a,b,k_2)\in S^0$, then $k_1=k_2$, and for each $(a,b,k)\in S^0$ one has $k\leq q-\abs{r}$. 
\end{theorem}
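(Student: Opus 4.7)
The plan is to build $S^0$ by a greedy $l$-adic approximation procedure modelled on Farey fractions, and then verify each of the three required properties by a careful minimality/uniqueness argument.

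First I would reformulate the defining condition: since $(a,l)=1$, the condition $m\in \Z_l^\times[a,b,k]$ is equivalent to $am\equiv b\alpha \pmod{l^{q+|r|+k}}$, so each $\Z_l^\times[a,b,k]$ is the intersection of $\Z_l^\times$ with a single residue class modulo $l^{q+|r|+k}$. I would then establish existence: for every $m\in \Z_l^\times$ some $(a,b,k)\in S$ with $k\le q-|r|$ satisfies $m\in \Z_l^\times[a,b,k]$. The lattice $\Lambda_m:=\{(a,b)\in \Z^2 : am-b\alpha\in l^{2q}\Z_l\}$ has index $l^{2q}$ in $\Z^2$, and the symmetric closed box $\{|a|\le l^{q+r^+-r^-},\,|b|\le l^{q-r^++r^-}\}$ has area $4l^{2q}$, so Minkowski's theorem produces a non-zero $(a_0,b_0)\in \Lambda_m$ inside it. Using $\alpha,m\in \Z_l^\times$ one sees that $l\mid a_0 \Leftrightarrow l\mid b_0$, so dividing out the common $l$-power $l^s$ and then the remaining $l$-coprime gcd yields a primitive pair $(a,b)$ with $(a,l)=(b,l)=1$; the size bounds on the Minkowski box force $s\le \min(v_l(a_0),v_l(b_0))\le q-|r|$, so the resulting triple lies in $S$ at level $k=q-|r|-s\in[0,q-|r|]$. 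A short case analysis (using $r\le q$ and $\alpha,m\in \Z_l^\times$ to rule out $b_0=0$) lets me arrange $b\ge 1$ after sign adjustment.

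Next I would assemble $S^0$ greedily. For each $m\in \Z_l^\times$ set $k(m)$ to be the least $k\ge 0$ admitting a witness $(a,b,k)\in S$; by the previous step $k(m)\le q-|r|$. Among such witnesses, pick $(a(m),b(m))$ by a canonical deterministic tie-break (e.g.\ smallest $b$, then smallest $|a|$, then $a\ge 0$), and put $S^0:=\{(a(m),b(m),k(m)):m\in \Z_l^\times\}$. Coverage of $\Z_l^\times$ is automatic and the bound on $k$ is built in. For disjointness let $(a,b,k)\ne (a',b',k')$ in $S^0$ and suppose $m^*\in \Z_l^\times[a,b,k]\cap \Z_l^\times[a',b',k']$. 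Then $k(m^*)\le\min(k,k')$. If, say, $k(m^*)<k$, pick a witness $m_1$ for $(a,b,k)$; since $\Z_l^\times[a,b,k]$ is a residue class mod $l^{q+|r|+k}$ we get $m_1\equiv m^*\pmod{l^{q+|r|+k(m^*)}}$, and hence $(a(m^*),b(m^*),k(m^*))$ also covers $m_1$, contradicting $k(m_1)=k$. Therefore $k=k'=k(m^*)$; at this common level both sets are the residue class of $m^*$ mod $l^{q+|r|+k}$, so they coincide, whereupon the canonical tie-break selects a single pair, contradicting distinctness of the triples. Property (i) falls out of the same template: $(a,b,k_1),(a,b,k_2)\in S^0$ with $k_1<k_2$ would give any $m\in \Z_l^\times[a,b,k_2]\subseteq \Z_l^\times[a,b,k_1]$ satisfying $k(m)\le k_1<k_2=k(m)$, impossible.

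The main obstacle will be the Minkowski existence step: simultaneously securing $(a,b)=(a,l)=(b,l)=1$, positivity $b\ge 1$, and inclusion in $S$ at a non-negative level $k$. The delicate bookkeeping comes from the fact that dividing out common factors alters both the size rectangle and the congruence modulus; the non-obvious observation that makes the descent terminate inside $S$ is that the excess $l$-divisibility $s=v_l(\gcd(a_0,b_0))$ is automatically controlled by the shorter side of the Minkowski box, namely $s\le q-|r|$, so the descent from $k=q-|r|$ never falls below $k=0$.
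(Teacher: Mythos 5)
This statement is not proved in the paper at all: it is quoted verbatim from Blomer--Mili\'cevi\'c and the text simply says ``the first step is to apply \cite[Theorem~3]{BM15}''. So there is no internal proof to compare against; your argument has to be judged on its own. Your construction is a legitimate and essentially complete route: the reformulation of $\Z_l^{\times}[a,b,k]$ as a single residue class $m\equiv a^{-1}b\alpha \pmod{l^{q+|r|+k}}$ is correct, the lattice $\Lambda_m$ does have index $l^{2q}$, the box $|a|\le l^{q+r}$, $|b|\le l^{q-r}$ has volume $4l^{2q}$, and the key bookkeeping identity $-|r|+2r^{\pm}=\pm r$ makes the size conditions at level $k=q-|r|-s$ match exactly; the bound $s\le\min(q+r,q-r)=q-|r|$ is exactly right and is the heart of the existence step. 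The greedy definition of $k(m)$ plus a tie-break that (as you implicitly use) depends only on $m \bmod l^{q+|r|+k(m)}$ does give disjointness and the uniqueness of $k$ for fixed $(a,b)$; that part of the argument is airtight.

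The one place that needs repair is the degenerate-point analysis in the Minkowski step. Your stated justification for ruling out $b_0=0$ (``using $r\le q$'') fails precisely at $r=q$: there the box is $|a|\le l^{2q}$, $|b|\le 1$, and $(\pm l^{2q},0)$ is a nonzero lattice point in the box, so Minkowski alone may hand you a useless point. Symmetrically, you never address $a_0=0$, which occurs for $r=-q$ via $(0,\pm l^{2q})$ and would produce $(a,l)=l\ne 1$ after division. Both cases are easy to patch without Minkowski: for $r=q$ take $b=1$ and the representative $0<a<l^{2q}$ of $\alpha m^{-1}\bmod l^{2q}$ (automatically coprime to $l$), and dually for $r=-q$ take $a=1$; for $|r|<q$ one checks from $a_0m\equiv b_0\alpha$ that neither coordinate of a nonzero lattice point in the box can vanish, so your descent applies verbatim. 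With that boundary case written out, the proof is complete; for the record, the original proof in \cite{BM15} reaches the same dissection through an explicit $p$-adic best-approximation algorithm rather than through Minkowski's theorem, so your route is genuinely independent of theirs and arguably shorter.
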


Applying this theorem with $\alpha = \alpha_{\chi_l} \in \Z_l$ as defined in \eqref{eq:char_log}, $q\leq \frac{n_l}{8}$ and some $\abs{r}\leq q$ yields
\begin{equation}
	L = \sum_{s=(a,b,k)\in S^0} \sum_{m\in \Z\cap \Z_p^{\times}[a,b,k]}\lambda_{\pi}(m)F\left(\frac{m}{M}\right). \nonumber  
\end{equation}
For later reference we define
\begin{equation}
	L_s=\sum_{m\in \Z\cap \Z_p^{\times}[a,b,k]}\lambda_{\pi}(m)F\left(\frac{m}{M}\right). \nonumber
\end{equation}
We estimate
\begin{equation}
	L \ll l^{n_l\epsilon} \max_{\substack{0\leq k\leq q-\abs{r},\\ A\leq \frac{1}{2}l^{k+2r^+}, \\ B\leq \frac{1}{2}l^{k+2r^-} }} \abs{L_{A,B,k}}, \quad \text{for } L_{A,B,k}=\sum_{\substack{s=(a,b,k)\in S^0, \\ A\leq \abs{a}<2A, \\ B\leq b<2B}} L_s . \label{eq:result_step_1}
\end{equation}
Good bounds for $L_{A,B,k}$ will suffice to establish good (non-trivial) bounds for $L$. Thus, we fix $A$, $B$ and $k$ until otherwise stated. 

We define the $p$-adic test function
\begin{equation}
	W_l(s;x)=\mathbbm{1}_{\Z_l^{\times}[a,b,k]}(x) \chi_l(x)^{-1}\psi_l\left(\frac{a\overline{b}}{l^{\frac{n_l}{2}}}x\right) \label{eq:def_p_adic_testfk}
\end{equation}
and rewrite
\begin{equation}
	L_s = \sum_m \lambda_{\pi_0}(m)W_l(s;m) e\left(\frac{a\overline{b}}{l^{\frac{n_l}{2}}}m\right)F\left(\frac{m}{M}\right). \nonumber
\end{equation}
Here we use the fact, that 
\begin{equation}
	\psi_{l}\left(\frac{a\overline{b}}{l^{\frac{n_l}{2}}}m\right)e\left(\frac{a\overline{b}}{l^{\frac{n_l}{2}}}m\right) = \psi\left(\frac{a\overline{b}}{l^{\frac{n_l}{2}}}m\right) = 1. \nonumber
\end{equation}
Furthermore, we use the reciprocity formula
\begin{equation}
	e\left(\frac{a\overline{b}}{l^{\frac{n_l}{2}}}m\right) = e\left(-\frac{a\overline{l^{\frac{n_l}{2}}}}{b}m\right)e\left(\frac{a}{bl^{\frac{n_l}{2}}}m\right) \nonumber
\end{equation}
to obtain 
\begin{equation}
	L_s = \sum_m \lambda_{\pi_0}(m) e\left(-\frac{a\overline{l^{\frac{n_l}{2}}}}{b}m\right)W_{\infty}\left(m\right)W_l(s;m), \nonumber
\end{equation}
for
\begin{equation}
	W_{\infty}(x) = e\left(\frac{a}{bl^{\frac{n_l}{2}}}x\right) F\left(\frac{x}{M}\right). \label{eq:def_Warch}
\end{equation}
This last formula for $L_s$ suits the application of our Vorono\"i summation formula which we will apply in the next subsection. Before we will continue we make the following observation.

\begin{lemma} \label{lm:periodicity_Wl}
The function $W_l(s;\cdot)$ is periodic modulo $l^{\frac{n_l}{2}-q-\abs{r}-k} \Z_l$.
\end{lemma}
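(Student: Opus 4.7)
The plan is as follows. Write $\nu := \frac{n_l}{2}-q-\abs{r}-k$. The Farey dissection gives $k\leq q-\abs{r}$ and the paper fixes $q\leq n_l/8$, so $q+\abs{r}+k \leq 2q \leq n_l/4 \leq \nu$. Hence shifting $m$ by $l^\nu y$ ($y\in\Z_l$) preserves the residue of $m$ modulo $l^{q+\abs{r}+k}$, and thus its membership in $\Z_l^{\times}[a,b,k]$: the indicator factor of $W_l^{\omega_{\pi,l}}(s;\cdot)$ is locally constant at scale $l^\nu$. It remains to analyze the phase on each coset.

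Fix $m_0\in\Z_l^\times$ with $am_0\equiv b\alpha_{\chi_l}\pmod{l^{n_l/2}}$, so every $m\in\Z_l^{\times}[a,b,k]$ takes the form $m=m_0(1+l^{q+\abs{r}+k}u)$ with $u\in\Z_l$. Multiplying the congruence by $\bar b$ yields the crucial identity $a\bar b m_0 \equiv \alpha_{\chi_l}\pmod{l^{n_l/2}}$, which encodes the $p$-adic stationary phase built into the Farey dissection. Using \eqref{eq:char_log} to rewrite $\chi_l^{-1}(1+l^{q+\abs{r}+k}u)$ as $\psi_l(-\alpha_{\chi_l}\log_l(1+l^{q+\abs{r}+k}u)/l^{n_l/2})$ and expanding $\psi_l(a\bar b m/l^{n_l/2})$ with the identity above, the $u$-dependent part of the phase (which absorbs the slowly varying character $\omega_{\pi,l}$ on $\Z_l^{\times}$) collapses, up to a factor depending only on $m_0$, to
\begin{equation}
\psi_l\left(\sum_{j\geq 2}\frac{(-1)^{j}\,\alpha_{\chi_l}\,u^j}{j\,l^{n_l/2 - j(q+\abs{r}+k)}}\right), \nonumber
\end{equation}
the linear-in-$u$ contributions having cancelled exactly.

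The final task is to check that each $j\geq 2$ term above is invariant modulo $\Z_l$ under the shift $u\mapsto u + l^{n_l/2-2(q+\abs{r}+k)}(y/m_0)$, which corresponds to $m\mapsto m+l^\nu y$. A binomial expansion of $(u+l^\mu y/m_0)^j - u^j$ with $\mu = n_l/2-2(q+\abs{r}+k)$ shows that for each $i\geq 1$ the $i$-th term has $l$-valuation $(i-1)\frac{n_l}{2} + (j-2i)(q+\abs{r}+k) + v_l\binom{j}{i} - v_l(j)$; using $q+\abs{r}+k\leq n_l/4$, $n_l\geq 10$, the oddness of $l$, and Legendre's formula to control $v_l(j)$, this exponent is always non-negative. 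Hence each change lies in $\Z_l$ and is killed by $\psi_l$, yielding the claimed periodicity. The main obstacle is this valuation bookkeeping for higher Taylor terms; the conceptual content is just that the Farey choice $\alpha=\alpha_{\chi_l}$ forces the first variation of the phase to vanish on each coset, so that only second-order (and higher) variations remain, and these are too small to see at scale $l^\nu$.
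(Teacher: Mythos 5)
Your argument is essentially correct, but it takes a genuinely different and much heavier route than the paper's. The paper does not parametrize the whole Farey cell around a distinguished center $m_0$; it simply expands the increment $m\mapsto m+yl^{\nu}$ directly, where $\nu=\frac{n_l}{2}-q-\abs{r}-k$. The key observation you do not exploit is that $\nu\geq\frac{n_l}{4}=\frac{a(\chi_l)}{2}$, so by the truncation remark following \eqref{eq:char_log} the identity $\chi_l(m+yl^{\nu})=\chi_l(m)\,\psi_l\bigl(\alpha_{\chi_l}m^{-1}yl^{\nu}/l^{n_l/2}\bigr)$ is \emph{exact} — there are no higher Taylor terms of $\log_l$ at this scale. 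Combined with the exact multiplicativity of $\psi_l(a\overline{b}\,\cdot/l^{n_l/2})$, the whole increment collapses to a single factor $\psi_l\bigl((\alpha m^{-1}\mp a\overline{b})y/l^{q+\abs{r}+k}\bigr)$, and the definition of $\Z_l^{\times}[a,b,k]$ says precisely that this coefficient lies in $\Z_l$, so the factor is $1$. Your detour — expanding to all orders in $u$ at the coarser scale $l^{q+\abs{r}+k}$, invoking the strong congruence $am_0\equiv b\alpha\pmod{l^{n_l/2}}$ to cancel the linear term, and then running Legendre-formula bookkeeping on every binomial term of $(u+l^{\mu}y/m_0)^j-u^j$ — is valid (I checked your valuation estimate; it holds since $l$ is odd and $q+\abs{r}+k\leq n_l/4$), but all of that machinery evaporates once one expands around $m$ itself rather than around $m_0$. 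What your approach buys in exchange is a complete description of $W_l^{\omega_{\pi,l}}(s;\cdot)$ on the cell as an explicit quadratic-plus-higher phase, which is essentially the content of the later lemma evaluating $\mathcal{L}_{s,c}$; for the periodicity statement alone it is overkill. One caveat you should tidy up: $W_l^{\omega_{\pi,l}}(s;m)=\omega_{\pi,l}(m)W_l(s;m)=\chi_l(m)\psi_l(a\overline{b}m/l^{n_l/2})$ on the cell, so the character whose logarithmic expansion enters is $\chi_l$, not $\chi_l^{-1}$; with the two contributions written as you have them the linear terms add rather than cancel, and one needs the sign conventions for $\psi_l$ on $\Q\hookrightarrow\A$ (equivalently, replacing $\alpha_{\chi_l}$ by $-\alpha_{\chi_l}$) to make the stationarity exact. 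This sign bookkeeping is glossed over in the paper as well, but since your proof hinges on exact cancellation of the linear term it deserves an explicit line.
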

\begin{proof}
First, observe that for $s=(a,b,k)\in S_0$ we have $k\leq q-\abs{r}$ and deduce
\begin{equation}
	\frac{n_l}{2}-q-\abs{r}-k \geq \frac{n_l}{2}-2q\geq \frac{n_l}{4}>0.\nonumber
\end{equation}
For $m\in \Z_l^{\times}[a,b,k]$ and $y \in \Z_l$ we argue as on \cite[p. 582]{BM15} to obtain
\begin{eqnarray}
	W_l(s;m+yl^{\frac{n_l}{2}-q-\abs{r}-k}) = \chi_l^{-1}(m)\psi_l\left(\frac{a\overline{b}}{l^{\frac{n_l}{2}}}m\right)\psi_l\left(\frac{-\alpha m^{-1}+a\overline{b}}{l^{q+\abs{r}+k}}y\right), \nonumber
\end{eqnarray}
One concludes using the definition of $\Z_l[a,b,k]$.
\end{proof}

\subsection{Applying the Vorono\"i formula}

In this section we will dualize the sum $L_s$ by  applying Theorem~\ref{th:vor_with_Cs}. The resulting expression will then be brought into good shape by using stationary phase arguments to evaluate the $p$-adic and archimedean Hankel transforms, see Lemma~\ref{lm:p-adic_stat} and Lemma~\ref{lm:arch_stat} below. The upshot is that we end up with a formula in a form which is suitable for extracting the necessary cancellation. This concludes step two described above. 

Combining Theorem~\ref{th:vor_with_Cs} with Lemma~\ref{lm:properties_p_adc_Besel} and Lemma~\ref{lm:periodicity_Wl} yields
\begin{equation}
	L_s = \sum_{c\geq -n_l+2q+2\abs{r}+2k}L_{s,c}, \nonumber
\end{equation}
for
\begin{eqnarray}
	L_{s,c} &=&\zeta_{N_1}(1) \frac{\eta(\pi_0,a,b)}{b_0b_2\sqrt{N_0}}\sum_{\du{\mu}\in \prod_{p\mid b_1}{}_p\mathfrak{X}_{v_p(b_1)}} \frac{\du{\mu}(\frac{ab_0b_2N_0N'_1(\du{\mu})}{b_1l^c})}{\sqrt{b_1N_1'(\du{\mu})}}\nonumber \\ 	
	&& \cdot\sum_{m_1\mid N_1^{\infty}}C(\pi_{0,N_1},\du{\mu},b_1,m_1) \lambda_{(\pi_0^{N_2})_{\du{\mu}}}\left(m_1\frac{N_1'(\du{\mu})}{b_1N_1}\right)\nonumber \\  
	&&\cdot \sum_{\substack{m\in\Z,\\(m,lN_1)=1}} e\left(l^cm_1m\frac{\overline{a N_0N_1}}{b_0b_2}\right) \lambda_{(\pi_0^{N_2})_{\du{\mu}}}\left(m
	\right)\mathcal{H}_{\sgn(m)}W_{\infty}\left(\frac{l^cm_1\abs{m}}{b_0^2b_2^2b_1N_1N_0}\right) \nonumber \\
	&&\qquad\qquad\qquad \qquad\qquad\qquad \qquad\qquad\qquad\cdot \mathcal{H}W_{l}\left(s;\frac{l^cm_1m}{b_0^2b_2^2b_1N_1N_0}\right).\nonumber
\end{eqnarray} 

We define
\begin{eqnarray}
	\mathcal{L}_{s,c}(m) &=& \sum_{\mu_l\in {}_l\mathfrak{X}_c'}\epsilon\left(\frac{1}{2},\mu_l^{-1}\right)^2\mu_l(mb^{-2}
 )^{-1}[\mathfrak{M}W_l(s;\cdot)](\mu_l), \nonumber  \\
	\mathcal{I}_{s,c}^{\pm}(m) &=& \int_0^{\infty}F\left(\frac{x}{M}\right)e\left(\frac{ax}{bl^{\frac{n_l}{2}}}\right)\mathcal{J}_{\kappa}^{\pm}\left(\frac{4\pi\sqrt{mx}}{bl^c}\right)dx \nonumber
\end{eqnarray}
This notation is taken from \cite{BM15}. However, the $l$-adic oscillatory function $\mathcal{L}_{s,c}$ differs slightly from the one given in \cite[(5.11)]{BM15}. This is due to the fact that we are working in the adelic setting which makes our function purely local. 

Note that according to the definition of $W_{\infty}$, \eqref{eq:def_Warch}, and the archimedean Hankel-transform, \eqref{eq:def_hankelarch}, we have
\begin{equation}
	\mathcal{I}_{s,c}^{\pm}(m) = \mathcal{H}_{\pm}W_{\infty}\left(\frac{m}{b^2l^{2c}}\right).\nonumber
\end{equation}

Further we have the following properties of the $p$-adic Hankel-transform.
\begin{lemma}[$p$-adic stationary phase] \label{lm:p-adic_stat}
If $-n_l+2q+2\abs{r}+2k\leq c< -2$, then
\begin{equation}
	\mathcal{H}W_{l}(s;yl^c) = \begin{cases}
		\omega_{\pi_{0},l}(l^{-\frac{c}{2}})l^{\frac{c}{2}}  \mathcal{L}_{s,-\frac{c}{2}}(yb^2) &\text{ if $c$ is even}, \\
		0 &\text{ else,}
	\end{cases}\nonumber
\end{equation}
for $y\in \Z_l^{\times}$.
\end{lemma}
\begin{proof}
Observe that for $\mu_l\neq 1$ we have $a(\mu_l\pi_{0,l})=2a(\mu_l)$ and $L(s,\mu_l\pi_{0,l})=1$. Thus, according to the definition of the $p$-adic Hankel-transform, \eqref{eq:def_p_adic_bessel}, and \eqref{eq:nice_support_for_simplereps}, we can write
\begin{align}
	\mathcal{H}W_{l}(s;yl^c) =& l^{\frac{c}{2}}\sum_{\substack{\mu_l\in {}_l\mathfrak{X}\setminus\{1\}, \\ c=-2a(\mu)} } \mu_l(y^{-1})\epsilon(\frac{1}{2},\mu_l^{-1}\pi_{0,l})[\mathcal{M}W_l(s;\cdot)](\mu_l) \nonumber \\
	&\quad + l^{\frac{c}{2}}B_{\pi_{0,l}}(l^{-c})[\mathcal{M}W_l(s;\cdot)](1). \nonumber
\end{align}
Since $\tilde{\pi}_{0,l}$ is unramified \eqref{eq:formula_unram_reps} shows that the trivial character contributes only for $c\geq 2$. Therefore, as long as $c<-2$ is even, we find
\begin{align}
	\mathcal{H}W_{l}(s;yl^c) =& \omega_{\pi_{0},l}(l^{-\frac{c}{2}})l^{\frac{c}{2}}\sum_{\mu_l\in {}_l\mathfrak{X}_{-\frac{c}{2}}' } \mu_l(y^{-1})\epsilon(\frac{1}{2},\mu_l^{-1})^2[\mathcal{M}W_l(s;\cdot)](\mu) \nonumber \\
	=&\omega_{\pi_{0},l}(l^{-\frac{c}{2}})l^{\frac{c}{2}}  \mathcal{L}_{s,-\frac{c}{2}}(yb^2), \nonumber
\end{align}
where we used that $\epsilon(\frac{1}{2},\mu_l\pi_{0,l}) = \omega_{\pi_{0},l}(l^{-\frac{c}{2}})\epsilon(\frac{1}{2},\mu_l)^2$.
\end{proof}

With this at hand, for $1< c\leq \frac{n_l}{2}-q-\abs{r}-k$, we can rewrite
\begin{eqnarray}
	L_{s,-2c} &=&\zeta_{N_1}(1) \frac{\eta(\pi,a,b)}{b_0b_2\sqrt{N_0}}\sum_{\du{\mu}\in \prod_{p\mid b_1}{}_p\mathfrak{X}_{v_p(b_1)}} \frac{\du{\mu}(ab_0b_1^{-1}b_2N_0N'_1(\du{\mu})l^{-2c})}{\sqrt{b_1N_1'(\du{\mu})}}\nonumber \\ 	
	&& \cdot \sum_{m_1\mid N_1^{\infty}}C(\pi_{0,N_1},\du{\mu},b_1,m_1)\lambda_{(\pi_0^{N_2})_{\du{\mu}}}\left(m_1\frac{N_1'(\du{\mu})}{b_1N_1}\right)\nonumber \\  
	&&\cdot \sum_{\substack{m\in\Z,\\(m,lN_1)=1}} e\left(m_1m\frac{\overline{a N_0N_1}}{b_0b_2l^{2c}}\right) \lambda_{(\pi_0^{N_2})_{\du{\mu}}}\left(m
	\right)\mathcal{I}_{s,c}^{\sgn(m)}\left(m_1\abs{m}\frac{b_1}{N_0N_1}\right)\nonumber \\
	&&\qquad\qquad \qquad \qquad\qquad \qquad \qquad \cdot \omega_{\pi_{0},l}(l^{c})l^{-c} \mathcal{L}_{s,c}\left(m_1m\frac{b_1}{N_0N_1}\right).\nonumber
\end{eqnarray} 

The oscillatory parts, $\mathcal{L}_{s,c}$ and $\mathcal{I}_{s,c}^{\pm}$, appearing in these sums have been evaluated in \cite{BM15}. Since we only shifted the argument we can reuse this evaluations. Recall \cite[Lemma~3, Lemma~4]{BM15}.

\begin{lemma}[Archimedean stationary phase]\label{lm:arch_stat}
The function $\mathcal{I}_{s,c}^{\pm}(m)$ is $O((l^{n_l}m)^{-100})$ unless
\begin{equation}
	m\ll l^{2c}\left(\frac{B^2 Z^2}{M}+\frac{A^2 M}{l^{n_l}}\right) l^{3\epsilon n_l} = \mathcal{M}l^{3\epsilon n_l}. \label{eq:range_I}
\end{equation}
In the range \eqref{eq:range_I} one has
\begin{equation}
	\mathcal{I}_{s,c}^{\pm}(m)=\left(\frac{\mathcal{M}l^{3\epsilon n_l}}{m}\right)^{\frac{1}{4}}\min\left(M,\frac{BZl^{\frac{n_l}{2}}}{A}\right)e(\theta_{s,c}m)W_{s,c}(m)+O(l^{-100n_l}), \nonumber
\end{equation}
where $W_{s,c}$ is smooth and satisfies
\begin{equation}
	x^j\frac{d^j}{dx^j}W_{s,c}(x)\ll_j l^{3\epsilon n_l}(Z^2l^{5\epsilon n_l})^j \nonumber
\end{equation}
and where
\begin{equation}
	\theta_{s,c}=\begin{cases}
		-\frac{l^{\frac{n_l}{2}-2c}}{ab} &\text{ if } \frac{AM}{BZ^2l^{\frac{n_l}{2}}}\geq 1,\\
		0 &\text{ else.}
	\end{cases} \nonumber
\end{equation}
\end{lemma}

\begin{lemma}
The function $\mathfrak{L}_{s,c}$ evaluates to
\begin{equation}
	\mathcal{L}_{s,c}(m) = \begin{cases}
		\gamma l^{\frac{c}{2}-\frac{n_l}{4}}\chi_l^{-1}(\overline{a}b)\sum_{\pm}\Phi_c^{\pm}(\frac{m}{ab}) &\text{ if }\frac{\alpha bm}{a}\in (\Z_l^{\times})^2, \\
		0&\text{ else.}
	\end{cases} \nonumber
\end{equation}
where $\gamma$ is a constant of absolute value $1$ which depends only on the parity of $\frac{n_l}{2}$, and 
\begin{multline}
	\Phi_c^{\pm}(x) = \epsilon(\pm(\alpha x)_{\frac{1}{2}},p^{\rho})\chi_l^{-1}(\alpha+\frac{1}{2}p^{2(n-c)}x\pm p^{n-c}(\alpha x+\frac{1}{4}p^{2(n-c)}x^2)_{\frac{1}{2}}) \\
	 \cdot \psi_p(-\frac{1}{p^c}(\frac{1}{2}p^{n-c}x\pm(\alpha x+\frac{1}{4}p^{2(n-c)}x^2)_{\frac{1}{2}})). \label{eq:l-adic_oscilation}
\end{multline}
\end{lemma}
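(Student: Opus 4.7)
The plan is to mirror the proof of \cite[Lemma~4]{BM15}: unfold the Mellin transform, express $\epsilon(1/2,\mu_l)^2$ as a double additive integral, collapse the sum over $\mu_l\in {}_l\mathfrak{X}_c'$ by orthogonality, and finish with a $p$-adic stationary phase analysis.

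First, substituting $\omega_{\pi,l}=\chi_l^2$ and the definition of $W_l(s;\cdot)$, the inner Mellin transform unfolds as
\[
[\mathfrak{M}W_l^{\omega_{\pi,l}}(s;\cdot)](\mu_l)=\int_{\Z_l^{\times}[a,b,k]}\chi_l\mu_l(y)\,\psi_l\!\left(\frac{a\overline{b}}{l^{n_l/2}}y\right)d^{\times}y.
\]
Because every $\mu_l\in{}_l\mathfrak{X}_c'$ has conductor exactly $c\geq 2$, each factor $\epsilon(1/2,\mu_l)$ is an ordinary Gauss sum $\int_{\Z_l^{\times}}\mu_l(x)\psi_l(x/l^c)d^{\times}x$. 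Inserting these integral representations into $\mathcal{L}_{s,c}(m)$ and interchanging the $\mu_l$-sum with the $x_1,x_2,y$ integrations, orthogonality of the characters of $(\Z_l/l^c\Z_l)^{\times}$ of exact conductor $c$ forces a single congruence of the shape $x_1 x_2\equiv \lambda(y,m)\pmod{l^c}$, reducing $\mathcal{L}_{s,c}(m)$ to a purely additive oscillatory integral on $\Z_l^\times[a,b,k]$ twisted by $\chi_l$.

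Second, parametrize the surviving residue class by $y=y_0(1+l^{\kappa}u)$ with $y_0$ satisfying $b\alpha/y_0-a\in l^{q+|r|+k}\Z_l$ and $\kappa$ large enough that \eqref{eq:char_log} turns $\chi_l(y)$ into an additive phase through the $p$-adic logarithm. Since $n_l\geq 10$ and $c\leq n_l/2-q-|r|-k$, the truncation hypothesis $\kappa\geq n_l/4$ of \eqref{eq:char_log} is satisfied after Taylor-expanding $\log_l$ up to its quadratic term, and the resulting phase is a polynomial of degree two in $u$.

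Third, apply $p$-adic stationary phase. The critical equation in $u$ is a quadratic whose discriminant is, up to units, $\alpha bm/a$. Hence the sum vanishes unless $\alpha bm/a$ is a square in $\Z_l^{\times}$; when this square condition holds, the two $p$-adic square roots $\pm(\alpha bm/a)^{1/2}$ produce the two contributions $\Phi_c^{\pm}$. Evaluating each critical point via the standard quadratic Gauss sum yields a unit prefactor $\gamma$, whose value depends only on the parity of $n_l/2$ (the exponent governing the quadratic Gauss sum), together with the normalization $l^{-(n_l+2c)/4}$ assembled from the Jacobian of the change of variables, the two factors $l^{-c/2}$ built into $\epsilon(1/2,\mu_l)^2$, and the normalization of $d^\times y$. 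The remaining phase, after using the reciprocity already invoked to unravel $\psi_l(a\overline{b}y/l^{n_l/2})$, matches exactly the formula defining $\Phi_c^{\pm}(m/(ab))$, and the leftover unit factor $\chi_l(\overline{a}b)$ appears when one rewrites $\chi_l(y_0)$ in terms of $\chi_l$ evaluated at the stationary point.

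The main obstacle is not conceptual but algebraic: one has to track several occurrences of $\chi_l$ evaluated at $p$-adic square roots in order to recover exactly the closed form for $\Phi_c^{\pm}$, and to pin down the parity dependence of $\gamma$. Because the function $\mathcal{L}_{s,c}$ here differs from the one in \cite[(5.11)]{BM15} only by the adelization of the underlying Dirichlet character and an inessential shift in the argument of $W_l$, the calculation of \cite[Lemma~4]{BM15} carries over verbatim once these cosmetic differences are reconciled.
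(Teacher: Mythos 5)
Your proposal follows the same route as the paper: both reduce the evaluation of $\mathcal{L}_{s,c}$ to the Gauss-sum expansion of $\epsilon(\frac{1}{2},\mu_l)^2$, character orthogonality, and the $p$-adic stationary phase computation of \cite[Section~7.2]{BM15}, with the only genuine adjustments being the measure normalizations and the additive character convention $\theta(x)=\psi_l(-x)$ (which your minus sign in the final phase correctly absorbs). Your sketch is in fact more detailed than the paper's proof, which simply defers to \cite[Lemmas~9 and~10]{BM15} after noting these two differences.
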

\begin{proof}
The computations are essentially the same as in \cite[Section~7.2]{BM15}. Thus let us show how to de-adelize $\mathcal{L}_{s,c}$. This reduces the statement directly to \cite[Lemma~4]{BM15}.

Note that there is a one to one correspondence between primitive Dirichlet characters $\xi$ with conductor $l^c$ and $\mu_l\in{}_l\mathfrak{X}_c'$. The natural choice is such that the adelization of $\xi$ is $\chi_{\mu_l}^{-1}$. In particular, we have $\xi(x) = \mu_l^{-1}(x)$ for all $(x,l)=1$. Furthermore, $\tau(\xi)=l^{\frac{c}{2}}\epsilon(\frac{1}{2},\mu_l^{-1})$. In view of Lemma~\ref{lm:periodicity_Wl} we find that
\begin{equation}
	[\mathcal{M}W_l(s;\cdot)](\mu_l)  = \frac{\zeta_l(1)}{l^{\frac{n_l}{2}-q-\abs{r}-k}}\hat{f}_s(\xi), \nonumber
\end{equation}
for the to $\mu_l$ corresponding Dirichlet character $\xi$. Here we temporarily use some notation from \cite{BM15}. In particular, the definition of $\hat{f}_s$ from \cite[(5.6)]{BM15}. Thus we have
\begin{equation}
	\mathcal{L}_{s,c}(m)=\frac{\zeta_l(1)}{l^{\frac{n_l}{2}-q-\abs{r}-k}} \sum_{\substack{\xi \mod l^c, \\\xi \text{ primitive}}}\frac{\tau(\xi)^2}{l^c}\xi(b^2\overline{m})\hat{f}_s(\xi). \nonumber
\end{equation} 
for $(m,l)=1$. To the remaining sum we can apply \cite[Lemma~4]{BM15} directly. The result follows after translating back to our notation.

\end{proof}

Combining everything we have
\begin{equation}
	L_s= \sum_{1< c \leq \frac{n_l}{2}-q-\abs{r}-k} L_{s,-2c} + E, \nonumber
\end{equation}
where $E$ collects the vales of $c$ together that we neglected till so far. More precisely,
\begin{equation}
	E=\sum_{c\geq -2} L_{s,c}. \nonumber
\end{equation} The following estimate for the error $E$ can be understood as a truncation in the $l$-aspect. It is closely related to the contributions treated in the beginning of \cite[Section~5.3]{BM15}.

\begin{lemma}
Under our working assumptions we have
\begin{equation}
	E \ll_{\pi_0,j}\zeta_l(1) M l^{-q-\abs{r}-k}\left( 1+\frac{l^{1+q-r}}{\sqrt{M}} \right)^{j+\frac{1}{2}}\left(\frac{Zl^{1+q-r}}{\sqrt{M}}+\frac{l^{1-q+r}\sqrt{M}}{l^{\frac{n_l}{2}}}\right)^{j}. \nonumber
\end{equation} 
\end{lemma}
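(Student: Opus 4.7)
The plan is to recognise $E$ as the sum of those Voronoi-transformed terms $L_{s,c'}$ whose index $c'$ lies outside the main range $2\le c'\le \frac{n_l}{2}-q-\abs{r}-k$. The upper end of the range is enforced by the support statement of Lemma~\ref{lm:properties_p_adc_Besel} applied with periodicity modulus $l^{\frac{n_l}{2}-q-\abs{r}-k}$ from Lemma~\ref{lm:periodicity_Wl}: any $c'$ larger than this produces zero because $\tilde{W}_l$ vanishes on the relevant scale. On the other end, the observations made just above about $a(\mu_l\pi_{0,l})$, $L(s,\mu_l\pi_{0,l})$ and $\epsilon(\tfrac12,\mu_l\pi_{0,l})$ rule out all sufficiently negative $c'$. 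Hence $E$ is a finite sum of terms $L_{s,c'}$ with $c'\le 1$, each of which will be estimated individually by separating the archimedean and $l$-adic oscillatory pieces.

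For each such $c'$ I would first handle the archimedean integral $\tilde{W}_{\infty,\pm}$. Repeated integration by parts in the Hankel integral defining $\tilde{W}_{\infty,\pm}$, using the hypothesis $W_\infty^{(j)}\ll Z^j$ on the bump $W_\infty$ of support length $M$, gives a decay factor of the form $\bigl(1+\frac{l^{1+k-r}}{\sqrt{M}}\bigr)^{-j-1/2}$ times the displayed product $\bigl(Zl^{1+k-r}/\sqrt{M}+l^{1-k+r}\sqrt{M}/l^{n_l/2}\bigr)^{-j}$, once one substitutes the effective range of the dual variable for $c'\le 1$; this is the standard argument behind the first Lemma stated in this subsection, applied now outside the main window. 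Simultaneously, the $l$-adic weight $\tilde{W}_l(s;\cdot)$ is controlled by the pointwise bound $\ll l^{-(n_l+2c')/4}$ that comes from $B_{\mu_l\tilde{\pi}_{0,l},1/2}$ via \eqref{eq:def_p_adic_bessel} and \eqref{eq:nice_support_for_simplereps}, together with the character-sum length $l^{-q-\abs{r}-k}$ introduced by the periodicity of $W_l^{\omega_{\pi,l}}$.

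The inner sums over $m$ and $m_1$ can then be estimated trivially after invoking the Kim--Sarnak bound for $\lambda_{(\pi_0^{N_2})_{\du{\mu}}}(m)$ and the estimate $\abs{C(\pi_{0,N_1},\du{\mu},b_1,m_1)}\ll m_1^{7/64+\epsilon}$ from Theorem~\ref{th:vor_with_Cs}. Collecting the factors—the normalising constant $(b_0b_2\sqrt{N_0 b_1 N_1'(\du{\mu})})^{-1}$, the $l$-adic size estimate, the archimedean decay factor, and the trivial sum of size $\ll M$ coming from the shape of $\tilde{W}_{\infty,\pm}$—and then summing over the finitely many surviving $c'$, one obtains the claimed bound. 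The two summands inside the displayed bracket are exactly the two pieces of the effective truncation range $\mathcal{M}_{c'}=l^{2c'}(B^2Z^2/M+A^2M/l^{n_l})$ transported through the integration by parts, and the overall factor $l^{-q-\abs{r}-k}$ comes from the periodicity-induced shortening of both the $\mu_l$-sum and the support of $\tilde{W}_l(s;\cdot)$.

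The main obstacle is neither analytic nor combinatorial but rather one of careful bookkeeping: three scales interact, namely the archimedean truncation length $\mathcal{M}_{c'}$, the $l$-adic size $l^{-(n_l+2c')/4}$, and the $\du{\mu}$- and $m_1$-normalisations hidden inside $L_{s,c'}$, and one must verify that all the $l$-powers combine to reproduce the particular exponents $1+k-r$ and $1-k+r$ in the displayed estimate. The parameter $j$ simply records how many integrations by parts are performed, so the bound is really a one-parameter family to be balanced against the main term later; no new ideas beyond the lemmas already proved above are needed, only a patient calculation.
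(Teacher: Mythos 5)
Your outline follows the paper's proof in all essentials: $E$ is recognised as the tail of the $c$-sum left over after the support restrictions from Lemmas~\ref{lm:properties_p_adc_Besel} and \ref{lm:periodicity_Wl}, the archimedean transform $\tilde{W}_{\infty,\pm}$ is controlled by the integration-by-parts decay estimate imported from Blomer--Mili\'cevi\'c, the Hecke eigenvalues and the constants $C(\pi_{0,N_1},\du{\mu},b_1,m_1)$ are handled by Kim--Sarnak, and the rest is summed trivially. The one point where you go beyond (and astray from) the paper is the $l$-adic weight: you assert a pointwise saving $\tilde{W}_l\ll l^{-(n_l+2c')/4}$ ``coming from $B_{\mu_l\tilde{\pi}_{0,l},1/2}$ via \eqref{eq:def_p_adic_bessel} and \eqref{eq:nice_support_for_simplereps}''. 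Equation \eqref{eq:nice_support_for_simplereps} only says that $B$ is an indicator function, i.e.\ $\abs{B}\leq 1$; the square-root cancellation $l^{-(n_l+2c)/4}$ is the content of the explicit stationary-phase evaluation of $\mathcal{L}_{s,c}$, which is carried out (and needed) only in the main range $2\leq c\leq \frac{n_l}{2}-q-\abs{r}-k$, not for the tail. The paper instead uses the trivial bound $\tilde{W}_l(s;y)\ll (m,l^{\infty})^{\frac{1}{2}}\sup_{\mu_l}\abs{[\mathfrak{M}W_l^{\omega_{\pi,l}}(s;\cdot)](\mu_l)}$ and then bounds the Mellin transform by the measure of the support $\Z_l^{\times}[a,b,k]$, which is $\zeta_l(1)l^{-q-\abs{r}-k}$; this measure bound --- not the shortening of the $\mu_l$-sum --- is the true source of the factor $p^{-q-\abs{r}-k}$ in the statement. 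Since you do carry the factor $l^{-q-\abs{r}-k}$ and the target bound requires no saving from $B$ at all, your argument closes once the unjustified $l^{-(n_l+2c')/4}$ is replaced by the trivial bound $\abs{B}\leq 1$; as written, though, that intermediate claim is not supported.
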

\begin{proof}
By taking absolute values into the sum defining $L_{s,c}$ and using \eqref{eq:estimat_C} together with some other trivial estimates we get
\begin{eqnarray}
	E &=& \sum_{c\geq -2} L_{s,c} \nonumber \\
	&\ll_{N}& \frac{1}{b}\sum_{\du{\mu}\in \prod_{p\mid b_1}{}_p\mathfrak{X}_{v_p(b_1)}}\sum_{m\in \Z_{\neq 0}}(m,N_1^{\infty})^{\frac{7}{64}}\abs{\lambda_{(\pi_0^{N_2})_{\du{\mu}}}\left(\frac{mN_1'(\du{\mu})}{(m,l^{\infty})b_1N_1}\right)} \nonumber \\
	&&\qquad \cdot \abs{\mathcal{H}_{\sgn(m)}W_{\infty}\left(\frac{\abs{m}}{l^2b_0^2b_2^2b_1N_1N_0}\right)} \abs{\mathcal{H}W_{l}\left(s;\frac{m}{l^2b_0^2b_2^2b_1N_1N_0}\right)}. \nonumber
\end{eqnarray}
As described in \cite[Section~5.3, (2.10)]{BM15} we have the bound
\begin{eqnarray}
	\mathcal{H}_{\sgn(m)}W_{\infty}\left(\frac{\abs{m}}{l^2b_0^2b_2^2b_1N_1N_0}\right) \ll_{\pi_{0,\infty},N,j} M\left( 1+\frac{lb}{\sqrt{M\abs{m}}} \right)^{j+\frac{1}{2}}\left(\frac{Z_0 l b}{\sqrt{M\abs{m}}}\right)^{j}, \nonumber
\end{eqnarray}
for $Z_0 = Z+AMB^{-1}l^{-\frac{n_l}{2}}$. From \eqref{eq:def_p_adic_bessel} we deduce the trivial estimate
\begin{equation}
	\mathcal{H}W_l\left( s;\frac{m}{l^2b_0^2b_2^2b_1N_1N_0}\right) \ll (m,l^{\infty})^{\frac{1}{2}}\sup_{\mu_l}\abs{[\mathfrak{M}W_l(s;\cdot)](\mu_l)}. \nonumber
\end{equation}
Furthermore, according to \cite{BB11}, we can always estimate 
\begin{equation}
\abs{\lambda_{(\pi_0^{N_2})_{\du{\mu}}}\left(\frac{mN_1'(\du{\mu})}{(m,l^{\infty})b_1N_1}\right)} \ll_{N,\epsilon} \left(\frac{m}{(m,l^{\infty})}\right)^{\frac{7}{64}+\epsilon}.\nonumber
\end{equation}
Using these estimates together with  
\begin{eqnarray}
	w\leq pb\leq 2pB\leq p^{k+2r^-+1}, \quad k\leq l-\abs{r}, \quad -\abs{r}+2r^{\pm} = \pm r,\quad  \frac{w}{B}\leq 2p  \nonumber 
\end{eqnarray}
yields 
\begin{eqnarray}
	E &\ll_{\pi_0,j}&M\left( 1+\frac{l^{1+l-r}}{\sqrt{M}} \right)^{j+\frac{1}{2}}\left(\frac{Zl^{1+l-r}}{\sqrt{M}}+\frac{l^{1-l+r}\sqrt{M}}{l^{\frac{n_l}{2}}}\right)^{j}\nonumber \\
	&&\qquad\qquad\qquad\qquad\qquad\qquad\cdot \sup_{\mu_l}\abs{[\mathfrak{M}W_l(s;\cdot)](\mu_l)}.\nonumber
\end{eqnarray}
The result follows by estimating $\abs{[\mathfrak{M}W_l(s;\cdot)](\mu_l)}$ trivially.
\end{proof}

\subsection{Extracting cancellation on average}

In this final section we perform the third and fourth step as described above. We start by introducing some more notation. This will make it easier to see how the Cauchy-Schwarz inequality is applied. We set
\begin{eqnarray}
	\mathcal{M}' &=& \mathcal{M}\frac{N_0N_1}{b_1} \text{ and }\nonumber \\
	\tilde{W}_{s,c,\du{\mu}} &=& \frac{B}{b}\chi_l^{-1}(\overline{a}b)\du{\mu}(ab_0b_1^{-1}b_2N_0N'_1(\du{\mu})l^{2c})\eta(\pi_0,a,b)W_{s,c}. \nonumber
\end{eqnarray}
In particular we have $\abs{\tilde{W}_{s,c,\du{\mu}}}\asymp_f \abs{W_{s,c}}$. Furthermore, we define
\begin{eqnarray}
	\kappa_s(x,\du{\mu}) &=& e\left(\left( l^c\frac{\overline{a N_0N_1}}{b_0b_2}+\frac{\theta_{s,c}b_1}{N_0N_1} \right)x\right)  \tilde{W}_{s,c,\du{\mu}}\left(\frac{xb_1}{N_0N_1} \right) \sum_{\pm}\Phi_c^{\pm}\left(\frac{xb_1}{N_0N_1ab}\right). \nonumber
\end{eqnarray}
Inserting the results from the previous subsection and dealing with the error terms in the obvious way leads to
\begin{equation}
	L \ll_{\pi_0} l^{\epsilon n_l}\max_{\substack{0\leq k\leq q-\abs{r},\\ 1< c\leq \frac{n_l}{2}-q-\abs{r}-k}}\max_{\substack{A\leq l^{k+2r^+}, \\ B\leq l^{k+2r^-},\\ b_1\mid N, \\ \du{\mu}\in \prod_{p\mid b_1 }{}_p\mathfrak{X}_{v_p(b_1)}}}\left(\abs{L_{A,B,b_2,k,c, \du{\mu}}}+(ABl^{\epsilon n_l})E+l^{-50n_l}\right), \label{eq:end_Step_2}
\end{equation}
for
\begin{eqnarray}
	L_{A,B,b_2,k,c,\du{\mu}} &=& \frac{\min(M,\frac{BZl^{\frac{n_l}{2}}}{A})}{l^{\frac{n_l+2c}{4}}B} \sum_{\substack{r\mid b_1^{\infty},\\r\leq l^{3\epsilon n_l} \mathcal{M}'}}\sum_{\substack{(m,lb_1)=1,\\m\leq \frac{l^{3\epsilon n_l} \mathcal{M}'}{r},}} C(\pi_{0,N_1},\du{\mu},b_1,r)\nonumber \\
	&& \lambda_{(\pi_0^{N_2})_{\du{\mu}}}\left(mr\frac{N_1'(\du{\mu})}{b_1N_1}\right)\left(\frac{ \mathcal{M}'l^{3\epsilon n_l}}{\abs{m}r}\right)^{\frac{1}{4}}\sum_{\substack{s=(a,b,k)\in S^{\circ},\\ b=b_0b_1b_2,\\ A\leq \abs{a}< 2A,\\ B\leq b< 2B,\\ \alpha\frac{b_0b_2mr}{aN_0}\in(\Z_l^{\times})^2}} \kappa_s(mr,\du{\mu}). \nonumber
\end{eqnarray}
Finally, we define
\begin{equation}
	\Xi_{s_1,s_2,\du{\mu}} = \sum_{\substack{1\leq m\leq \mathcal{M}'q^{3\epsilon}, \\ \alpha \frac{b^{(j)}m}{a^{(j)}b_1N_0N_1}\in(\Z_l^{\times})^2 } } \kappa_{s_1}(m,\du{\mu})\overline{\kappa_{s_2}(m,\du{\mu})}.\nonumber
\end{equation}

An application of the Cauchy-Schwarz inequality together with the Ranking-Selberg bound yields
\begin{equation}
	L_{A,B,b_2,k,c} \ll_{N} M^{\frac{1}{2}}Zl^{2\epsilon n_l}p^{\frac{2c-n_l}{4}}\left(\sum_{\substack{s_1,s_2\in S^{\circ},\\ A\leq a^{(j)}< 2A, \\ B\leq b^{(j)}< 2B,\\ b^{(j)}=b_0^{(j)}b_1b_2^{(j)}}} \abs{\Xi_{s_1,s_2,\du{\mu}}}\right)^{\frac{1}{2}}. \label{eq:part_of_step3} 
\end{equation}
This is similar to \cite[(5.17)]{BM15} and completes step three. Indeed, the application of Cauchy-Schwarz successfully removed the automorphic weights and we are left with an average of exponential sums. We still need to bound this average non-trivially.

Moving on to the final step we have to bound the exponential sums $\Xi_{s_1,s_2,\du{\mu}}$. To do so we adapt \cite[Lemma~5]{BM15} to our situation. Indeed, we will be able to reduce this directly to the situation treated in \cite{BM15}. Note that the key estimate comes from a second derivative $p$-adic van der Corput estimate given in \cite[Theorem~5]{BM15}. This goes as follows.
\begin{lemma} \label{lm:after_vdC}
Under the usual assumptions  we have
\begin{equation}
	\Xi_{s_1,s_2,\du{\mu}} = \sum_{\Omega\in\{0,\text{ord}_p(a^{(1)}b^{(1)}-a^{(2)}b^{(2)}\}} \Xi_{s_1,s_2,\du{\mu},\Omega}. \label{eq:Xi_decomp}
\end{equation}
Furthermore,
\begin{equation}
	\Xi_{s_1,s_2,\du{\mu},\Omega} \ll_N \begin{cases}
		q^{17\epsilon} Z^2 p (p^{\frac{\Omega-c}{2}}\mathcal{M}+p^{\frac{c-\Omega}{2}}) &\text{ if } \Omega\leq c-2,\\
		q^{9\epsilon}\mathcal{M} &\text{ if }c-1\leq \Omega \leq \infty.
	\end{cases} \nonumber
\end{equation}
\end{lemma}
In the proof we closely follow \cite[Section~9]{BM15}.
\begin{proof}
Let $v=\frac{n_l}{2}-c$, $\epsilon = \pm 1$ and $x\in \alpha (\Z_l^{\times})^2$. Then
\begin{equation}
	\tilde{\Phi}_v^{\epsilon}(x)= \chi_l^{-1}(\alpha+\frac{1}{2}l^{2v}x+\epsilon l^v(\alpha x+\frac{1}{4}l^{2v}x^2)_{\frac{1}{2}})\psi_l(-\frac{1}{l^c}(\frac{1}{2}l^vx+\epsilon(\alpha x+\frac{1}{4}l^{2v}x^2)_{\frac{1}{2}})).\nonumber
\end{equation} 
Further, we define
\begin{equation}
	\boldsymbol{\Phi}_{\mathbf{s},c}^{\mathbb{\epsilon}}(x) = \tilde{\Phi}_{\frac{n_l}{2}-c}^{\epsilon_1}\left(\frac{xb_1}{a^{(1)}b^{(1)}N_0N_1}\right)\overline{\tilde{\Phi}_{\frac{n_l}{2}-c}^{\epsilon_2}\left(\frac{xb_1}{a^{(2)}b^{(2)}N_0N_1}\right)} \nonumber
\end{equation}
where $\mathbf{s}=(s_1,s_2)$ and $\boldsymbol{\epsilon}=(\epsilon_2,\epsilon_2)$. We also set
\begin{eqnarray}
	\mathbf{W}_{\mathbf{s},c,\du{\mu}}(x) &=& \tilde{W}_{s_1,c,\du{\mu}}\left(x \frac{b_1}{N_0N_1}\right) \overline{\tilde{W}_{s_2,c,\du{\mu}}\left(x \frac{b_1}{N_0N_1}\right) }, \nonumber\\
	\boldsymbol{\epsilon}_{\mathbf{s},c}^{\boldsymbol{\epsilon}}(x) &=& \epsilon(\epsilon_1(\alpha m b_1\overline{N_0N_1a^{(1)}b^{(1)}})_{\frac{1}{2}},p^{\rho})\overline{\epsilon}(\epsilon_2(\alpha m b_1\overline{N_0N_1a^{(2)}b^{(2)}})_{\frac{1}{2}},p^{\rho}) \text{ and } \nonumber \\
	w_{\mathbf{s},c} &=& \left( l^c\frac{b_1\overline{a N_0N_1}}{b^{(1)}}+\frac{\theta_{s_1,c}b_1}{N_0N_1} \right)-\left(  l^c\frac{b_1\overline{a N_0N_1}}{b^{(2)}}+\frac{\theta_{s_2,c}b_1}{N_0N_1} \right).\nonumber
\end{eqnarray}

We can also assume that $a^{(1)}b^{(1)}a^{(2)}b^{(2)} \in(\Z_l^{\times})^2$ since otherwise the two conditions in the $s_1$, $s_2$ sum can not be satisfied simultaneously. Under this condition, and in the new notation we have
\begin{equation}
	\Xi_{s_1,s_2,\du{\mu}} = \sum_{\boldsymbol{\epsilon}\in\{\pm 1\}^2}\sum_{\substack{m\leq \mathcal{M}',\\ \alpha \frac{b^{(1)}m}{a^{(1)}b_1N_0N_1}\in(\Z_l^{\times})^2}} \boldsymbol{\epsilon}_{\mathbf{s},c}^{\boldsymbol{\epsilon}}(m)e(w_{\mathbf{s},c}m)\boldsymbol{\Phi}_{\mathbf{s},c}^{\mathbb{\epsilon}}(m)\mathbf{W}_{\mathbf{s},c,\du{\mu}}(m). \nonumber
\end{equation}
It is clear that \cite[Lemma~13]{BM15} holds also in our case. This is because our $\boldsymbol{\Phi}_{\mathbf{s},c}^{\mathbb{\epsilon}}$ is simply a shift of the one considered in the reference. Furthermore, all the necessary assumptions are in place to make this work. The decomposition \eqref{eq:Xi_decomp} is as in \cite{BM15} and is obvious from the result \cite[Lemma~13]{BM15}.

We note that $\text{ord}_l(b_1((N_0N_1)^{-1})=0$ so that we can continue exactly as in \cite{BM15}. Indeed, \cite[Lemma~13]{BM15} provides us with the necessary conditions to apply \cite[Theorem~5]{BM15} when $\Omega\leq c-2$. In the re remaining cases we estimate trivially. After discarding possible factors coming from the shift in the archimedean factor we obtain the desired bounds.   
\end{proof}

Finally, we note that the bounds for $\Xi_{s_1,s_2,\du{\mu},\Omega}$ as well as the $\Omega$-decomposition are independent of $\du{\mu}$ and $b_1$. Thus, we can follow exactly the argument from \cite[Section~5.5]{BM15}. Note that this includes assuming
\begin{equation}
	Z^5l^{\frac{7}{3}}l^{\frac{2n_l}{3}}\ll_N M \ll_N (l^{\frac{n_l}{2}}Z)^{1+\epsilon}. \nonumber
\end{equation} As in \cite[Section~5.1]{BM15} one can justify that it is enough to treat this range for $M$. Finally one arrives at
\begin{equation}
	L\ll_{\pi_0} M^{\frac{1}{2}}Z^{\frac{5}{2}}l^{\frac{7}{6}}l^{\frac{n_l}{6}+11\epsilon n_l} \nonumber
\end{equation}
and the proof of Theorem~\ref{th:main_th_2} is complete.

The statement of Theorem~\ref{th:main_th_1} follows from Theorem~\ref{th:main_th_2} using standard arguments including adelization, approximate functional equation and partitions of unity.

\appendix

\section{Tables for $c_{t,l}(\mu)$} \label{app:ctlv}

In this appendix we recall some results from \cite{As17_1}. We will state them in a notation which is suitable for applications in the setting of paper.

Throughout this section we are dealing with smooth irreducible unitary generic representations $\pi_{p}$ of $GL_2(\Q_{p})$. To such a representation we attach the local Whittaker new vector $W_{\pi,p}$, normalized by $W_{\pi,p}(1)=1$. We have the expansion
\begin{equation}
	W_{\pi,p}(g_{t,l,v}) = \sum_{\mu_p\in {}_p\mathfrak{X}_l}c_{t,l}(\mu_p)\mu_p(v). \nonumber
\end{equation}
These constants have been computed in \cite[Section~2]{As17_1}. In the following we define new constants via
\begin{equation}
	c_{t,l}(\mu_p) = c_{p}(\pi_{p},l,t,\mu_p)\zeta_{p}(1)p^{-\frac{l+t+a(\mu_p\pi_p)}{2}}\lambda_{\chi_{\mu_p}\pi}(p^{t+a(\mu_p\pi_{p})+\delta_{\mu_p\pi_{p}}}),  \nonumber
\end{equation}
for some $\delta_{\mu\pi_{p}}\in \N$ which in most cases turn out to be the degree of the Euler-factor of $\mu\pi_{p}$.

In the following subsections we give evaluations of the constants for each possible representation focusing on the non-zero cases. As a result we obtain the bound
\begin{equation}
	\abs{c_{p}(\pi_{p},l,t,\mu_p)} \leq 5p^{\frac{1}{2}}t\max_{i=1,2}(\abs{\alpha_i}^t), \label{eq:bound_cp}
\end{equation}
for $\alpha_i = \chi_i(\varpi_{p})$ if $\pi_p = \chi_1\boxplus\chi_2$ and $\alpha_i =1$ otherwise. Note that, since $\pi_p$ is unitary, we have $\abs{\alpha_i}=1$ except for $\chi_1$ equals $\chi_2$ up to unramified twist. The latter cannot be excluded without assuming the Ramanujan conjecture. Indeed, such representations might arise as components of twists of Maa\ss\  forms failing the Ramanujan conjecture.

\subsection{Supercuspidal representations}

Recall that in this case $\lambda_{\chi_{\mu_p}\pi_p}(p^m)=\delta_{m=0}$ and $\delta_{\mu_p\pi_p}=0$ for all $\mu_p$. Thus from \cite[Section~2.1]{As17_1} we extract the following.

\begin{equation}\begin{array}{| c |c | c | c |}
	\hline
	c_{p}(\pi_p,l,t,\mu_p) & \mu_p = 1 & \mu_p\in {}_p\mathfrak{X}_l\setminus\{1\} \\ 
	\hline
	l=0 & \epsilon(\frac{1}{2},\tilde{\pi}_p)\zeta_{p}(1)^{-1} & -  \\
	\hline
	l=1 & -p^{-\frac{1}{2}}\epsilon(\frac{1}{2},\tilde{\pi}_p) & \epsilon(\frac{1}{2},\mu_p)\epsilon(\frac{1}{2},\mu^{-1}_p\tilde{\pi}_p) \\
	\hline
	l>1 & 0 & \epsilon(\frac{1}{2},\mu_p)\epsilon(\frac{1}{2},\mu^{-1}_p\tilde{\pi}_p)  \\
	\hline
\end{array}\nonumber\end{equation}

\subsection{Twists of Steinberg}

Here we consider $\pi_{p}=\chi St$ for some ramified character $\chi$. We have 
\begin{equation}
	\lambda_{\chi_{\mu_p}\pi_p}(p^m) =\begin{cases}
		\delta_{m=0} &\text{ if } \mu_p \neq \chi^{-1}, \\
		q^{-\frac{m}{2}}\delta_{m\geq 0} &\text{ if } \mu_p = \chi^{-1}
	\end{cases} \nonumber
\end{equation}
and $\delta_{\mu_p\pi_p}=1$ if $\mu_p=\chi^{-1}$ and $0$ otherwise. As in \cite[Lemma~2.1]{As17_1} one obtains the following evaluations.

\begin{equation}\begin{array}{| c |c | c | c | c |}
	\hline
	c_{p}(\pi_p,l,t,\mu_p) & \mu_p = 1 & \mu_p = \chi^{-1} & \mu_p \in {}_p\mathfrak{X}'\setminus \{1,\chi^{-1}\}  \\
	\hline
	l=0 &\epsilon(\frac{1}{2},\tilde{\pi}_p)\zeta_{p}(1)^{-1} &-&- \\
	\hline
	l=1  & -\epsilon(\frac{1}{2},\tilde{\pi}_p)p^{-\frac{1}{2}} & \epsilon(\frac{1}{2},\mu_p)p^{-\frac{3}{2}} \text{ if } t\leq -2 & \epsilon(\frac{1}{2},\mu_p^{-1}\tilde{\pi}_p)\epsilon(\frac{1}{2},\mu_p)\\
	&& -\epsilon(\frac{1}{2},\mu_p)\frac{p^{\frac{1}{2}}}{\zeta_{p}(2)} \text{ if } t>-2 &  \\
	\hline
	l>1  & 0 & \epsilon(\frac{1}{2},\mu_p)p^{-\frac{3}{2}} \text{ if } t\leq -2 & \epsilon(\frac{1}{2},\mu^{-1}_p\tilde{\pi}_p)\epsilon(\frac{1}{2},\mu_p) \\
	&& -\epsilon(\frac{1}{2},\mu_p)\frac{p^{\frac{1}{2}}}{\zeta_{p}(2)} \text{ if } t>-2 &  \\
	\hline
\end{array}\nonumber\end{equation}

\subsection{ Irreducible principal series}

In this section we treat three cases. First, we look at $\pi_p = \chi_1  \boxplus \chi_2 $ with $\chi_1\vert_{\mathcal{O}^{\times}}\neq \chi_2\vert_{\mathcal{O}^{\times}}$. In this case $\delta_{\mu_p\pi_p}= 1$ if $\mu_p\vert_{\Z_p^{\times}} =\chi_i^{-1}\vert_{\Z_p^{\times}}$ and $0$ otherwise. Furthermore,
\begin{equation}
	\lambda_{\mu_p\pi_p}(p^m) =\begin{cases}
		\delta_{m=0} &\text{ if } \mu_p\vert_{\Z_p^{\times}} \neq \chi_i^{-1}\vert_{\Z_p^{\times}}, \\
		\chi_i(p^m)\delta_{m\geq 0} &\text{ if } \mu_p\vert_{\Z_p^{\times}} = \chi_i^{-1}\vert_{\Z_p^{\times}}
	\end{cases} \nonumber
\end{equation}

The following table can be deduced from \cite[Lemma~2.2]{As17_1}.

\begin{equation}\begin{array}{| c |c | c | c | c |}
	\hline
	c_{p}(\pi_{p},l,t,\mu_p) & \mu_p = 1 & \mu_p\vert_{\Z_p^{\times}} = \chi_i^{-1}\vert_{\Z_p^{\times}} & \mu_p \in {}_p\mathfrak{X}'\setminus \{1,\chi_i^{-1}\}  \\
	\hline
	l=0 &\epsilon(\frac{1}{2},\tilde{\pi}_p)\zeta_{p}(1)^{-1} &-&-\\
	\hline
	l=1  & -\epsilon(\frac{1}{2},\tilde{\pi}_p)p^{-\frac{1}{2}} & -\epsilon(\frac{1}{2},\mu_p^{-1}\tilde{\pi}_p)\epsilon(\frac{1}{2},\mu_p)\chi_i^{-1}(p)p^{-1} & \epsilon(\frac{1}{2},\mu^{-1}_p\tilde{\pi}_p)\epsilon(\frac{1}{2},\mu_p)  \\
	&&\qquad  \text{ if } t\leq -a(\mu_p\pi_p)-1 &\\
	&& \epsilon(\frac{1}{2}\mu^{-1}_p\tilde{\pi}_p)\epsilon(\frac{1}{2},\mu_p)\chi_i^{-1}(p)\zeta_{p}(1)^{-1}&  \\
	&& \qquad  \text{ if } t> -a(\mu_p\pi_p)-1 &\\
	\hline
	l>1  & 0 &  -\epsilon(\frac{1}{2},\mu_p^{-1}\tilde{\pi}_p)\epsilon(\frac{1}{2},\mu_p)\chi_i^{-1}(p)p^{-1} & \epsilon(\frac{1}{2},\mu_p^{-1}\tilde{\pi}_p)\epsilon(\frac{1}{2},\mu_p) \\
	&&\qquad \text{ if } t\leq -a(\mu_p\pi_p)-1 &\\
	&& \epsilon(\frac{1}{2}\mu^{-1}_p\tilde{\pi}_p)\epsilon(\frac{1}{2},\mu_p)\chi_i^{-1}(p)\zeta_{p}(1)^{-1}&  \\
	&&\qquad  \text{ if } t> -a(\mu_p\pi_p)-1 &\\
	\hline
\end{array}\nonumber\end{equation}

Next we look at $\pi_{p}= \chi_1\boxplus\chi_2$ where $\chi_1\vert_{\Z_p^{\times}}=\chi_2\vert_{\Z_p^{\times}}$.  In this case $\delta_{\mu_p\pi_p}= 2$ if $\mu\vert_{\Z_p^{\times}} =\chi_1^{-1}\vert_{\Z_p^{\times}}$ and $0$ otherwise. Furthermore,
\begin{equation}
	\lambda_{\mu_p\pi_p}(p^m) =\begin{cases}
		\delta_{m=0} &\text{ if } \mu_p\vert_{\Z_p^{\times}} \neq \chi_1^{-1}\vert_{\Z_p^{\times}}, \\
		\frac{\chi_1(p^{m+1})-\chi_2(p^{m+1})}{\chi_1(p)-\chi_2(p)}\delta_{m\geq 0} &\text{ if } \mu_p\vert_{\Z_p^{\times}} = \chi_1^{-1}\vert_{\Z_p^{\times}}
	\end{cases} \nonumber
\end{equation}

Using \cite[Lemma~2.2]{As17_1} we produce the following table.

\begin{equation}\begin{array}{| c |c | c | c | c |}
	\hline
	c_{p}(\pi_{p},l,t,\mu_p) & \mu_p = 1 & \mu_p\vert_{\Z_p^{\times}} = \chi_1^{-1}\vert_{\Z_p^{\times}} & \mu_p \in {}_p\mathfrak{X}'\setminus \{1,\chi_1^{-1}\}  \\
	\hline
	l=0 &\epsilon(\frac{1}{2},\tilde{\pi}_p)\zeta_{p}(1)^{-1} &-&- \\
	\hline
	l=1  & -\epsilon(\frac{1}{2},\tilde{\pi}_p)p^{-\frac{1}{2}} & \epsilon(\frac{1}{2},\mu_p)p^{-2} & \epsilon(\frac{1}{2},\mu_p^{-1}\tilde{\pi}_p)\epsilon(\frac{1}{2},\mu_p)  \\
	&&\qquad  \text{ if } t\leq -2 &\\
	&& -\epsilon(\frac{1}{2},\mu_p)\frac{p^{-1}}{\zeta_{p}(1)}&  \\
	&& \qquad  \text{ if } t= -1 & \\
	&& \epsilon(\frac{1}{2},\mu_p)(\frac{1+p^{-1}-p^{-2}}{\zeta_{p}(1)^2}\frac{\lambda_{\mu_p\pi_p}(p^{t})}{\lambda_{\mu_p\pi_p}(p^{t+2})}-\zeta_{p}(1)^{-1})&  \\
	&&\qquad  \text{ if } t\geq 0 & \\
	\hline
	l>1  & 0 &  \epsilon(\frac{1}{2},\mu_p)p^{-2} & \epsilon(\frac{1}{2},\mu^{-1}_p\tilde{\pi}_p)\epsilon(\frac{1}{2},\mu_p)  \\
	&&\qquad \text{ if } t\leq -2 &\\
	&&-\epsilon(\frac{1}{2},\mu_p)\frac{p^{-1}}{\zeta_{p}(1)}&  \\
	&&\qquad  \text{ if } t=-1 & \\
	&& \epsilon(\frac{1}{2},\mu_p)(\frac{1+p^{-1}-p^{-2}}{\zeta_{p}(1)^2}\frac{\lambda_{\mu_p\pi_p}(p^{t})}{\lambda_{\mu_p\pi_p}(p^{t+2})}-\zeta_{p}(1)^{-1})&  \\
	&&\qquad  \text{ if } t\geq 0 & \\
	\hline
\end{array}\nonumber\end{equation}

Finally, we need to look at $\pi_{p} = \chi_1\boxplus\chi_2$ with $a(\chi_1)>a(\chi_2)=0$.  In this case we have
\begin{equation}
	\lambda_{\mu_p\pi_p}(p^m) =\begin{cases}
		\delta_{m=0} &\text{ if } \mu_p \neq \omega_{\pi,p}^{-1}, \\
		\chi_2(p^m)\delta_{m\geq 0} &\text{ if } \mu_p = \omega_{\pi,p}^{-1}.
	\end{cases} \nonumber
\end{equation}
Also, $\delta_{\mu_p\pi_p}=1$ if $\mu_p=\omega_{\pi,p}^{-1}$ and $0$ otherwise. For technical reasons we also put $\delta_{\pi_p} = l$. From \cite[Lemma~2.3]{As17_1} one gets the following results.

\begin{equation}\begin{array}{| c |c | c | c | c |}
	\hline
	c_{p}(\pi_{p},l,t,\mu_p) & \mu_p = 1 & \mu_p = \omega_{\pi,p}^{-1} & \mu_p \in {}_p\mathfrak{X}'\setminus \{1,\omega_{\pi,p}^{-1}\}  \\
	\hline
	l=0 &\epsilon(\frac{1}{2},\tilde{\pi}_p)\zeta_{p}(1)^{-1} &-&-\\
	\hline
	l>1  & \epsilon(\frac{1}{2},\tilde{\pi}_p)\chi_1(p^l)p^{-\frac{l}{2}} &  -\omega_{\pi,p}(-1)\chi_2(p^{1-l})p^{-1} & \epsilon(\frac{1}{2},\mu_p^{-1}\tilde{\pi}_p)\epsilon(\frac{1}{2},\mu_p)  \\
	&&\qquad \text{ if } t\leq -a(\mu_p\pi_p)-1 &\\
	&& \omega_{\pi,p}(-1)\chi_2(p^{1-l})&  \\
	&&\qquad  \text{ if } t> -a(\mu_p\pi_p)-1 & \\
	\hline
\end{array}\nonumber\end{equation}

\bibliographystyle{plain}
\bibliography{bibliography} 

\end{document}